\newcommand\redsout{\bgroup\markoverwith{\textcolor{magenta}{\rule[0.5ex]{4pt}{0.8pt}}}\ULon}
\newcommand\bluesout{\bgroup\markoverwith{\textcolor{cyan}{\rule[0.5ex]{4pt}{0.8pt}}}\ULon}
\newcommand\greensout{\bgroup\markoverwith{\textcolor{green}{\rule[0.5ex]{4pt}{0.8pt}}}\ULon}
\newcommand\orangesout{\bgroup\markoverwith{\textcolor{orange}{\rule[0.5ex]{4pt}{0.8pt}}}\ULon}
\def\ArgOne#1{\arrowcolor#1,\relax}
\def\arrowcolor#1,#2,#3\relax{#1}
\def\ArgTwo#1{\arrownumber#1,\relax}
\def\arrownumber#1,#2,#3\relax{#2}
\def\ArrowColor#1{%
    \ArgOne{#1}%
}%
\def\ArrowNumber#1{%
    \ArgTwo{#1}%
}%
\tikzset{pics/.cd,
  pic InnerGraph/.style 2 args={code={
    \def\innerradius{1.5}
    \def\radius{3}
    \def\bendangle{5}

    \foreach \v/\lab in {1, 2, 3, 4, 5, 6, 7, 8}{  
      \node[inner sep=0pt,minimum size=12pt] (n1\v) at (45-\v*45:\innerradius cm) {};
    } 
    \foreach \v/\vnext in {1/4, 8/5}{
      \draw [#1, -{Stealth[length=3mm, width=2mm]}] (n1\v) to[bend right=\bendangle] (n1\vnext); 
      \draw [#1, -{Stealth[length=3mm, width=2mm]}] (n1\vnext) to[bend left=\bendangle] (n1\v); 
    }
    \foreach \v/\vnext in {2/7, 3/6}{
      \draw [#2, -{Stealth[length=3mm, width=2mm]}] (n1\v) to[bend right=\bendangle] (n1\vnext); 
      \draw [#2, -{Stealth[length=3mm, width=2mm]}] (n1\vnext) to[bend left=\bendangle] (n1\v); 
    }
    \node[circle,draw=black!80, line width=0.5mm, inner sep=0pt, fit={(n11) (n12) (n13) (n14) (n15) (n16) (n17) (n18)}] (n1) {};

  }}
}
\tikzset{pics/.cd,
  pic Eliplse/.style n args={7}{code={
  
    \def\innerradius{1.5}
    \def\radius{3}
    \def\innernodesize{0.15}

    \newcommand{\pairarrows}[5]{
    \def\bendangle{##5}
      \begin{tikzpicture}[overlay]
        \ifthenelse{##4 = 1}{\draw [##1, -{Stealth[length=3mm, width=2mm]}] (##2) to[bend right=\bendangle] (##3)}{};
        \ifthenelse{##4 = 2}{\draw [##1, -{Stealth[length=3mm, width=2mm]}] (##3) to[bend left=\bendangle] (##2)}{};
        \ifthenelse{##4 = 3}{
          \draw [##1, -{Stealth[length=3mm, width=2mm]}] (##2) to[bend right=\bendangle] (##3);
          \draw [##1, -{Stealth[length=3mm, width=2mm]}] (##3) to[bend left=\bendangle] (##2);}{};
      \end{tikzpicture}  
    }

    \node[circle, minimum size=\innernodesize cm, inner sep=0pt, outer sep=0pt] (ne1) {};
    \node[circle, minimum size=\innernodesize cm, inner sep=0pt, outer sep=0pt]  (ne2) [right = 2cm of ne1] {};
    \node[circle, minimum size=\innernodesize cm, inner sep=0pt, outer sep=0pt]  (ne3) [below =0.5 cm of ne1] {};
    \node[circle, minimum size=\innernodesize cm, inner sep=0pt, outer sep=0pt, label={[label distance=1cm]90:#7}]  (ne4) [below =0.5 cm of ne2] {};
    
    \pairarrows{\ArrowColor{#1}}{ne1.south}{ne2.south}{\ArrowNumber{#1}}{8}
    \pairarrows{\ArrowColor{#2}}{ne1.east}{ne2.west}{\ArrowNumber{#2}}{0}
    \pairarrows{\ArrowColor{#3}}{ne1.north}{ne2.north}{\ArrowNumber{#3}}{-8}
    
    \pairarrows{\ArrowColor{#4}}{ne3.south}{ne4.south}{\ArrowNumber{#4}}{8}
    \pairarrows{\ArrowColor{#5}}{ne3.east}{ne4.west}{\ArrowNumber{#5}}{0}
    \pairarrows{\ArrowColor{#6}}{ne3.north}{ne4.north}{\ArrowNumber{#6}}{-8}

    \node[rectangle, rounded corners=0.5cm, draw=black!80, line width=0.5mm, inner sep=10pt, fit={(ne1) (ne2) (ne3) (ne4)}] (n1) {};

  }}
}
\newcommand{\outerpairarrows}[5]{
    \def\bendangle{#5}
      \begin{tikzpicture}[overlay]
        \ifthenelse{#4 = 1}{\draw [#1, -{Stealth[length=3mm, width=2mm]}] (#2) to[bend right=\bendangle] (#3)}{};
        \ifthenelse{#4 = 2}{\draw [#1, -{Stealth[length=3mm, width=2mm]}] (#3) to[bend left=\bendangle] (#2)}{};
        \ifthenelse{#4 = 3}{
          \draw [#1, -{Stealth[length=3mm, width=2mm]}] (#2) to[bend right=\bendangle] (#3);
          \draw [#1, -{Stealth[length=3mm, width=2mm]}] (#3) to[bend left=\bendangle] (#2);}{};
      \end{tikzpicture}  
    }
\tikzset{pics/.cd,
  pic InnerGraph/.style 2 args={code={
    \def\innerradius{1}
    \def\radius{3}
    \def\bendangle{5}

    \foreach \v/\lab in {1, 2, 3, 4, 5, 6, 7, 8}{  
      \node[inner sep=0pt,minimum size=0pt] (n1\v) at (60-\v*45: \innerradius cm) {};
    } 
    \foreach \v/\vnext in {1/4, 8/5}{
      \draw [#1, -{Stealth[length=3mm, width=2mm]}] (n1\v) to[bend right=\bendangle] (n1\vnext); 
      \draw [#1, -{Stealth[length=3mm, width=2mm]}] (n1\vnext) to[bend left=\bendangle] (n1\v); 
    }
    \foreach \v/\vnext in {7/2, 6/3}{
      \draw [#2, -{Stealth[length=3mm, width=2mm]}] (n1\v) to[bend right=\bendangle] (n1\vnext); 
      \draw [#2, -{Stealth[length=3mm, width=2mm]}] (n1\vnext) to[bend left=\bendangle] (n1\v); 
    }
    \node[rectangle, rounded corners=0.8cm, draw=black!80, line width=0.5mm, inner sep=0pt, fit={(n11) (n12) (n13) (n14) (n15) (n16) (n17) (n18)}] (n1) {};

  }}
}
\tikzset{pics/.cd,
  pic InnerGraphBig/.style 2 args={code={
    \def\innerradius{1}

    \def\radius{3}
    \def\bendangle{5}

    \node[] (n11) at (-1.5,1) {};
    \node[] (n12) [right of=n11] {};
    \node[] (n13) [right of=n12] {};
    \node[] (n14) [right of=n13] {};
    \node[] (n15) [below of=n11] {};
    \node[] (n16) [below of=n15] {};
    \node[] (n17) [right of=n16] {};
    \node[] (n18) [right of=n17] {};
    \node[] (n19) [right of=n18] {};
    \node[] (n110) [above of=n19] {};

    \foreach \v/\vnext in {2/5, 3/6, 4/7, 8/10}{
      \draw [#1, -{Stealth[length=3mm, width=2mm]}] (n1\v) to[bend right=\bendangle] (n1\vnext); 
      \draw [#1, -{Stealth[length=3mm, width=2mm]}] (n1\vnext) to[bend left=\bendangle] (n1\v); 
    }
    \foreach \v/\vnext in {5/7, 1/8, 2/9, 3/10}{
      \draw [#2, -{Stealth[length=3mm, width=2mm]}] (n1\v) to[bend right=\bendangle] (n1\vnext); 
      \draw [#2, -{Stealth[length=3mm, width=2mm]}] (n1\vnext) to[bend left=\bendangle] (n1\v); 
    }
    \node[rectangle, rounded corners=0.8cm, draw=black!80, line width=0.5mm, inner sep=0pt, fit={(n11) (n12) (n13) (n14) (n15) (n16) (n17) (n18)}] (n1) {};

  }}
}
\tikzset{pics/.cd,
  pic InnerGraphGIGANT/.style 2 args={code={
    \def\innerradius{1}

    \def\radius{3}
    \def\bendangle{5}

    \node[] (n11) at (-1.5,1) {};
    \node[] (n12) [right of=n11] {};
    \node[] (n13) [right of=n12] {};
    \node[] (n14) [right of=n13] {};
    \node[] (n15) [below of=n11] {};
    \node[] (n16) [below of=n15] {};
    \node[] (n17) [right of=n16] {};
    \node[] (n18) [right of=n17] {};
    \node[] (n19) [right of=n18] {};
    \node[] (n110) [above of=n19] {};
    \node[] (n111) [above of=n13] {};
    \node[] (n112) [right of=n14] {};
    \node[] (n113) [right of=n112] {};
    \node[] (n114) [above of=n111] {};
    
    \foreach \v/\vnext in {2/5, 3/6, 4/7, 8/10}{
      \draw [#1, -{Stealth[length=3mm, width=2mm]}] (n1\v) to[bend right=\bendangle] (n1\vnext); 
      \draw [#1, -{Stealth[length=3mm, width=2mm]}] (n1\vnext) to[bend left=\bendangle] (n1\v); 
    }
    \foreach \v/\vnext in {5/7, 1/8, 2/9, 3/10}{
      \draw [#2, -{Stealth[length=3mm, width=2mm]}] (n1\v) to[bend right=\bendangle] (n1\vnext); 
      \draw [#2, -{Stealth[length=3mm, width=2mm]}] (n1\vnext) to[bend left=\bendangle] (n1\v); 
    }
    \node[rectangle, rounded corners=0.8cm, draw=black!80, line width=0.5mm, inner sep=0pt, fit={(n12) (n13) (n14) (n110) (n111) (n112) (n113) (n114)}] (n1) {};

  }}
}
\tikzset{pics/.cd,
  pic InnerGraphMini/.style 2 args={code={
    \def\innerradius{0.8}
    \def\radius{3}
    \def\bendangle{5}

    \foreach \v/\lab in {1, 2, 3, 4, 5, 6, 7, 8}{  
      \node[inner sep=0pt,minimum size=0pt] (n1\v) at (60-\v*45: \innerradius cm) {};
    } 
    \foreach \v/\vnext in {1/4, 8/5}{
      \draw [#1, -{Stealth[length=1.5mm, width=2mm]}] (n1\v) to[bend right=\bendangle] (n1\vnext); 
      \draw [#1, -{Stealth[length=1.5mm, width=2mm]}] (n1\vnext) to[bend left=\bendangle] (n1\v); 
    }
    \foreach \v/\vnext in {7/2, 6/3}{
      \draw [#2, -{Stealth[length=1.5mm, width=2mm]}] (n1\v) to[bend right=\bendangle] (n1\vnext); 
      \draw [#2, -{Stealth[length=1.5mm, width=2mm]}] (n1\vnext) to[bend left=\bendangle] (n1\v); 
    }
    \node[rectangle, rounded corners=0.7cm, draw=black!80, line width=0.5mm, inner sep=0pt, fit={(n11) (n12) (n13) (n14) (n15) (n16) (n17) (n18)}] (n1) {};

  }}
}
\tikzset{pics/.cd,
  pic EmptyBox/.style 2 args={code={
   \def\innerradius{1.5}
    \def\radius{3}
    \def\innernodesize{0.15}
    
    \node[circle, minimum size=\innernodesize cm, inner sep=0pt, outer sep=0pt] (ne1) {};
    \node[circle, minimum size=\innernodesize cm, inner sep=0pt, outer sep=0pt]  (ne2) [right =#1 cm of ne1] {};
    \node[circle, minimum size=\innernodesize cm, inner sep=0pt, outer sep=0pt]  (ne3) [below =#2 cm of ne1] {};
    
    \node[rectangle, rounded corners=0.5cm, draw=black!80, line width=0.5mm, inner sep=10pt, fit={(ne1) (ne2) (ne3)}] (n1) {};
  }}
}
\newtheorem{theorem}{Theorem}
\begin{document}

\title{Directed graphs without rainbow stars\thanks{The work of the first author was supported by the National Research, Development and Innovation Office - NKFIH under the grants FK 132060 and KKP-133819. The work of the second and the fourth author was supported by the National Science Centre grant 2021/42/E/ST1/00193. The work of the third author was supported by a grant from the Simons Foundation \#712036.}}

\author{
Daniel Gerbner\thanks{Alfréd Rényi Institute of Mathematics, HUN-REN.  E-mail: {\tt gerbner.daniel@renyi.hu}.} \and
Andrzej Grzesik\thanks{Faculty of Mathematics and Computer Science, Jagiellonian University, {\L}ojasiewicza 6, 30-348 Krak\'{o}w, Poland. E-mail: {\tt Andrzej.Grzesik@uj.edu.pl}.}\and
Cory Palmer\thanks{Department of Mathematical Sciences, University of Montana. E-mail: \texttt{cory.palmer@umontana.edu}.} \and
Magdalena Prorok\thanks{AGH University of Krakow, al.~Mickiewicza 30, 30-059 Krakow, Poland. E-mail: {\tt prorok@agh.edu.pl}.}}

\date{}

\maketitle

\begin{abstract}
In a rainbow version of the classical Tur\'an problem one considers multiple graphs on a common vertex set, thinking of each graph as edges in a distinct color, and wants to determine the minimum number of edges in each color which guarantees existence of a rainbow copy (having at most one edge from each graph) of a given graph. 
Here, we prove an optimal solution for this problem for any directed star and any number of colors.
\end{abstract}

\section{Introduction}

One of the central topics in extremal graph theory, known as the Tur\'an problem, is to determine the maximum number of edges of a graph on $n$ vertices that does not contain a copy of a given graph $F$ as a subgraph. Equivalently, the minimum number of edges that forces the existence of $F$ as a subgraph. Research on this topic and its various generalizations, provides a deep understanding of the relationship between various global properties and local structures of graphs.

Recently, a rainbow version of this problem has been intensively studied. In this variant, for an integer $c \geq 1$ we consider a collection of $c$ graphs $\mathcal{G} = (G_1, \ldots, G_c)$ on a common vertex set and say that a graph $F$ is a \emph{rainbow subgraph} of $\mathcal{G}$ (or $\mathcal{G}$ \emph{contains} $F$) if there exists an injective function $\varphi : E(F) \to [c]$ such that for each $e \in E(F)$ it holds $e \in G_{\varphi(e)}$. In other words, we think of each graph in $\mathcal{G}$ as edges in a distinct color and $\mathcal{G}$ as a $c$-edge-colored multigraph with each color spanning a simple graph.
We want to force the existence of a rainbow copy of $F$ in~$\mathcal{G}$ by having a large number of edges in each graph. 
Typically, by bounding the value of $\min_{1\leq i\leq c} e(G_i)$ \cite{ADGMS20, BG22}, i.e., the number of edges in each graph, or the sum $\sum_{1\leq i\leq c} e(G_i)$ \cite{CKLLS22, KSSV04}, but other measures are also considered, in particular the product $\prod_{1\leq i\leq c} e(G_i)$ \cite{Fra22, FGHLSTVZ22}, or more general functions of the number of edges \cite{FMR22, IKLS23}.

Such a rainbow Tur\'an problem was also considered for directed graphs in \cite{BGP23}, where the optimal solution (up to a lower order error term) for $\min_{1\leq i\leq c} e(G_i)$ and $\sum_{1\leq i\leq c} e(G_i)$ was provided, for any number of colors, when a directed or transitive rainbow triangle is forbidden.  
Here, we continue this line of research on directed graphs and consider a directed star as the forbidden rainbow graph. 

Let $S_{p,q}$ be the orientation of a star on $p+q+1$ vertices with \emph{center} vertex of indegree~$p$ and outdegree $q$.
Forbidding a rainbow $S_{p,q}$ in a collection of graphs $\mathcal{G} = (G_1, \ldots, G_c)$ is analogous to forbidding a rainbow $S_{q,p}$ in the collection of graphs obtained by changing the orientation of every edge in each graph from $\mathcal{G}$. Thus it is enough to consider this rainbow Tur\'an problem for $S_{p,q}$ only when $p \leq q$. As this problem is trivial when the number of colors $c$ is less than the number of edges in the forbidden rainbow graph, we consider only $c \geq p+q$.  

In Section~\ref{sec:S_0,q} we consider a star $S_{0,q}$ as the forbidden rainbow graph and prove, for every $n > c \geq q\geq 1$, exact bounds for $\sum_{i=1}^c e(G_i)$ (Theorem~\ref{thm:S_0,q_sum}) and $\min_{1\leq i\leq c} e(G_i)$ (Theorem~\ref{thm:S_0,q}).
In Section~\ref{sec:S_p,q} we consider $S_{p,q}$ as the forbidden rainbow graph for any $q \geq p \geq 1$ and prove bounds for $\sum_{i=1}^c e(G_i)$ (Theorem~\ref{thm:S_p,q_sum}) and $\min_{1\leq i\leq c} e(G_i)$ (Theorem~\ref{thm:S_p,q}), which are tight up to a lower order error term.
Additionally, in Section~\ref{sec:S_1,1} we provide exact bounds for any $c\geq2$ and $n \geq 3$ when a rainbow $S_{1,1}$, i.e., directed path of length 2, is forbidden, for both the sum (Theorem~\ref{thm:S_1,1_sum}) and the minimum (Theorem~\ref{thm:S_1,1_2+}) of the number of edges. 

\section{Rainbow directed $S_{0,q}$}\label{sec:S_0,q}

In this section the forbidden rainbow graph is a directed star with all edges oriented away from the center. As noted earlier, the problem is the same if we forbid a directed star with all edges oriented to the center. 
The following theorem provides the optimal bound for the sum of the number of edges in all graphs.

\begin{theorem}\label{thm:S_0,q_sum}
For  integers $n > c \geq q \geq 1$, every collection of directed graphs $G_1, \ldots, G_c$ on a common set of $n$ vertices containing no rainbow $S_{0,q}$ satisfies \[\sum_{i=1}^c e(G_i) \leq (q-1)(n^2-n).\]
Moreover, this bound is sharp.
\end{theorem}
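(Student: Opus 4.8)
The plan is to prove the upper bound by a vertex-by-vertex analysis and then exhibit a matching construction for sharpness. I will phrase everything in terms of out-degrees: for a vertex $v$ and a color $i$, write $d^+_i(v)$ for the number of edges of $G_i$ leaving $v$, so that $\sum_{i=1}^c e(G_i) = \sum_{v} \sum_{i=1}^c d^+_i(v)$. It therefore suffices to show that for every fixed $v$ the inner sum is at most $(q-1)(n-1)$, and then sum over the $n$ vertices.

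Fix a center $v$ and form the bipartite graph $B_v$ whose sides are the $n-1$ potential out-neighbours $u \neq v$ on one side and the $c$ colors on the other, joining $u$ to color $i$ exactly when $(v,u) \in E(G_i)$. The number of edges of $B_v$ is precisely $\sum_{i=1}^c d^+_i(v)$. The crucial reformulation is that a rainbow $S_{0,q}$ centered at $v$ is exactly a matching of size $q$ in $B_v$, since it consists of $q$ edges leaving $v$ to distinct targets in distinct colors. Hence the hypothesis forces the maximum matching of $B_v$ to have size at most $q-1$, so by König's theorem $B_v$ admits a vertex cover consisting of $a$ out-neighbours and $b$ colors with $a+b \le q-1$.

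Now every edge of $B_v$ is incident to a covered out-neighbour or a covered color. A single out-neighbour lies in at most $c$ colors, while a single color contains at most $n-1$ out-edges from $v$; thus the number of edges of $B_v$ is at most $ac + b(n-1)$. Since $n>c$ gives $n-1 \ge c$, this is at most $(a+b)(n-1) \le (q-1)(n-1)$, the desired per-vertex bound. Summing over the $n$ choices of $v$ yields $\sum_{i=1}^c e(G_i) \le n(q-1)(n-1) = (q-1)(n^2-n)$.

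For sharpness I would take $q-1$ of the colors to be the complete directed graph on the $n$ vertices (all $n(n-1)$ ordered pairs present) and leave the remaining $c-(q-1)$ colors empty; this collection has exactly $(q-1)(n^2-n)$ edges, and since only $q-1$ colors are nonempty, any $q$ edges leaving a common center must repeat a color, so no rainbow $S_{0,q}$ occurs. The only genuine subtlety lies in the upper bound: recognizing the forbidden configuration as a bipartite matching and, in the cover estimate, keeping separate the ``at most $c$'' bound on out-neighbours and the ``at most $n-1$'' bound on colors. The inequality $n-1 \ge c$ is exactly what forces the extremal distribution to place all cover weight on colors, matching the construction.
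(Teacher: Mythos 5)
Your proof is correct and takes essentially the same approach as the paper: the same per-vertex auxiliary bipartite graph between potential out-neighbours and colors, the same matching-equals-rainbow-star observation followed by K\"onig's theorem, and the same extremal construction of $q-1$ complete directed graphs. Your cover estimate $ac + b(n-1)$ merely spells out explicitly the step the paper compresses into ``as $n-1 \geq c$, the number of edges is at most $(q-1)(n-1)$.''
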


\begin{figure}[ht]
\centering
\begin{tikzpicture}[scale=0.6]
\def\radius{4}
\def\baseangle{30}
\def\bendangle{10}

\pic [local bounding box=A1] at (\baseangle-1*120:\radius cm) {pic InnerGraph={green}{red}};
\pic [local bounding box=A2] at (\baseangle-2*120:\radius cm) {pic InnerGraph={red}{blue}};
\pic [local bounding box=A3] at (\baseangle-3*120:\radius cm) {pic InnerGraph={blue}{green}};

\draw [green, -{Stealth[length=3mm, width=2mm]}, shorten >=1.05cm, shorten <=1.05cm] (A1.center) to[bend left=\bendangle] (A2.center); 
\draw [blue, -{Stealth[length=3mm, width=2mm]}, shorten >=1.05cm, shorten <=1.05cm] (A2.center) to[bend left=\bendangle] (A1.center); 
\draw [red, -{Stealth[length=3mm, width=2mm]}] (A1) to (A2); 
\draw [red, -{Stealth[length=3mm, width=2mm]}] (A2) to (A1); 

\draw [red, -{Stealth[length=3mm, width=2mm]}] (A2) to[bend left=\bendangle] (A3); 
\draw [green, -{Stealth[length=3mm, width=2mm]}] (A3) to[bend left=\bendangle] (A2); 
\draw [blue, -{Stealth[length=3mm, width=2mm]}] (A2) to (A3); 
\draw [blue, -{Stealth[length=3mm, width=2mm]}] (A3) to (A2); 

\draw [red, -{Stealth[length=3mm, width=2mm]}, shorten >=1.05cm, shorten <=1.05cm] (A1.center) to[bend left=\bendangle] (A3.center); 
\draw [blue, -{Stealth[length=3mm, width=2mm]}, shorten >=1.05cm, shorten <=1.05cm] (A3.center) to[bend left=\bendangle] (A1.center); 
\draw [green, -{Stealth[length=3mm, width=2mm]}] (A1) to (A3); 
\draw [green, -{Stealth[length=3mm, width=2mm]}] (A3) to (A1); 

\end{tikzpicture}
\caption{The optimal construction for 3 colors and forbidden rainbow $S_{0,3}$.}\label{fig:S_0,3}
\end{figure}
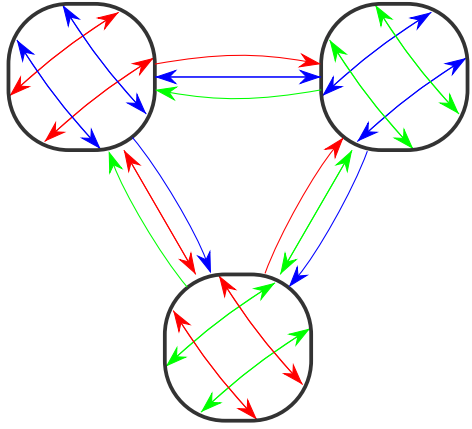

\begin{proof}
The sharpness of the bound follows from taking a collection of $q-1$ complete graphs, but there are more extremal constructions. 
We split the vertex set into disjoint subsets and \emph{assign} to each of them a different set of $q-1$ colors. 
This way each vertex has $q-1$ colors assigned. 
Then, for any two vertices $u$, $v$ and color $i \in [c]$, we add edge $uv$ to $G_i$ if color $i$ is assigned to $u$. 
Clearly this construction does not contain a rainbow $S_{0,q}$ as no vertex has positive outdegree in $q$ graphs, and each vertex has the sum of outdegrees over all graphs equal to $(q-1)(n-1)$ giving the total number of edges equal to $(q-1)(n^2-n)$.
An example construction of this type is shown in Figure~\ref{fig:S_0,3}.

To prove the upper bound, consider a collection of graphs $G_1, G_2, \ldots, G_c$ on a common set $V$ of $n$ vertices that does not contain a rainbow $S_{0,q}$. 
Let $v \in V$ be an arbitrary vertex. We will show that the total number of edges outgoing from $v$ is bounded by $(q-1)(n-1)$. 
Let~$H$~be an auxiliary bipartite graph between the vertices $V\setminus\{v\}$ and colors in $[c]$ created by connecting vertex $u \in V\setminus\{v\}$ and color $i \in [c]$ if $vu \in E(G_i)$.
The number of edges in~$H$ is equal to the number of outgoing edges from $v$ that we want to bound.
Note that the existence of a matching of size $q$ in $H$ means that there exists a rainbow $S_{0,q}$ with center $v$, which is not possible.
Therefore, the maximum matching in $H$ is of size at most $q-1$. 
From K\"onig's theorem this means that the minimum vertex cover is also of size at most $q-1$. 
As $n-1 \geq c$, this implies that the maximum number of edges in $H$ is bounded by $(q-1)(n-1)$, as desired. 
By summing it over all vertices we obtain $\sum_{i=1}^c e(G_i) \leq (q-1)(n^2-n)$.
\end{proof}

Note that the bound $n>c$ in Theorem~\ref{thm:S_0,q} is indeed needed, as otherwise the following collections of graphs contradict the theorem. 
If $c \geq n \geq q$, for each vertex $v$, add edges in each color from $v$ to $q-1$ other arbitrary vertices. This way \[\sum_{i=1}^c e(G_i) = (q-1)cn > (q-1)(n^2-n)\] and there is no rainbow $S_{0,q}$. 
While if $n\le q$, then a collection of complete directed graphs does not contain rainbow $S_{0,q}$ and satisfies \[\sum_{i=1}^c e(G_i) = c(n^2-n) > (q-1)(n^2-n).\]
The same constructions also provide an exact bound for $\min_{1\leq i \leq c} e(G_i)$ for any $n \leq c$.

Theorem~\ref{thm:S_0,q_sum} implies that for integers $n > c \geq q \geq 1$, every collection of directed graphs $G_1, \ldots, G_c$ on a common set of $n$ vertices containing no rainbow $S_{0,q}$ satisfies \[\min_{1\leq i \leq c} e(G_i) \leq \frac{q-1}{c}\left(n^2-n\right).\] 
Moreover, if $n(q-1)$ is divisible by $c$, then one can make a construction as detailed in the beginning of the proof of Theorem~\ref{thm:S_0,q_sum}, in which the number of edges in each graph is the same (see Figure~\ref{fig:S_0,3}). 
This means that for such $n$ the above bound is sharp. 
We can actually extend this to obtain the optimal bound for every $n>c$.

\begin{theorem}\label{thm:S_0,q}
For integers $n > c \geq q \geq 1$, every collection of directed graphs $G_1, \ldots, G_c$ on a common set of $n$ vertices containing no rainbow $S_{0,q}$ satisfies 
\[\min_{1\leq i \leq c} e(G_i) \leq \left\lfloor\frac{n(q-1)}{c}\right\rfloor(n-1) + r,\]
where $r$ is the remainder of $n(q-1)$ when divided by $c$.
Moreover, this bound is sharp.
\end{theorem}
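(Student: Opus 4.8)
The plan is to prove the upper bound on $\min_{1\leq i\leq c} e(G_i)$ and then construct a matching extremal example. For the upper bound, I would leverage Theorem~\ref{thm:S_0,q_sum}, which gives $\sum_{i=1}^c e(G_i) \leq (q-1)(n^2-n)$. Writing $n(q-1) = c\lfloor n(q-1)/c \rfloor + r$ with $0 \leq r < c$, the sum bound becomes $\sum_{i=1}^c e(G_i) \leq (n-1)\bigl(c\lfloor n(q-1)/c\rfloor + r\bigr) = c(n-1)\lfloor n(q-1)/c\rfloor + r(n-1)$. A naive averaging would only yield $\min_i e(G_i) \leq (n-1)\lfloor n(q-1)/c\rfloor + r(n-1)/c$, which is weaker than the claimed $\lfloor n(q-1)/c\rfloor(n-1) + r$ whenever $n-1 > c$. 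So the sum bound alone is not enough, and the main work is to sharpen it.

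The key refinement I would pursue is the per-vertex argument from the proof of Theorem~\ref{thm:S_0,q_sum}, applied color by color rather than only in aggregate. Recall that for each vertex $v$, the colors $i$ with $vu \in E(G_i)$ for some $u$ form (via K\"onig's theorem on the auxiliary bipartite graph $H_v$) a structure whose matching number is at most $q-1$; concretely, at most $q-1$ colors can have positive outdegree at $v$ unless covered by vertex-cover vertices on the $V\setminus\{v\}$ side. I would try to argue that, color by color, the number of edges that a single color $G_i$ can accumulate is constrained so that not all colors can simultaneously be large. The cleanest route is likely a discharging or counting argument: suppose for contradiction that every color satisfies $e(G_i) > \lfloor n(q-1)/c\rfloor(n-1) + r$; summing these strict inequalities over all $c$ colors should contradict the total bound $(q-1)(n^2-n)$ once the integrality of each $e(G_i)$ is used, since the floor terms and the additive $r$ are calibrated exactly to absorb the remainder $r(n-1)$ into the "$+r$" slack rather than spreading it as $r(n-1)/c$. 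The precise bookkeeping comparing $c\cdot\bigl(\lfloor n(q-1)/c\rfloor(n-1) + r + 1\bigr)$ against $(q-1)(n^2-n)$ is the crux, and I expect it to close with room to spare when $r > 0$ and to reduce to the exact divisible case of Theorem~\ref{thm:S_0,q_sum} when $r = 0$.

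For sharpness, I would modify the balanced construction sketched at the start of the proof of Theorem~\ref{thm:S_0,q_sum}. When $c \mid n(q-1)$ the balanced assignment of $q-1$ colors per vertex already equalizes edge counts at $\lfloor n(q-1)/c\rfloor(n-1)$ per color, matching the bound with $r=0$. When $r > 0$, the color assignment cannot be perfectly balanced: some colors will be assigned to $\lceil n(q-1)/c\rceil$ vertices and others to $\lfloor n(q-1)/c\rfloor$ vertices. I would assign colors so that exactly $c-r$ colors receive the floor count and adjust the remaining $r$ colors to each gain precisely one extra unit of contribution beyond $\lfloor n(q-1)/c\rfloor(n-1)$, engineering the construction (for instance by placing the extra incidences on a single dedicated vertex or by careful allocation of the $r$ surplus color-slots) so that the minimum over colors equals $\lfloor n(q-1)/c\rfloor(n-1) + r$. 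This construction must remain $S_{0,q}$-free, which is automatic since each vertex still has positive outdegree in only $q-1$ colors.

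The main obstacle is the upper-bound refinement: the gap between the easy averaging bound and the claimed bound is exactly the "$+r$ versus $+r(n-1)/c$" discrepancy, so the argument must extract extra structure beyond the global sum. I expect the resolution to hinge on the integrality of edge counts combined with the exact per-vertex matching constraint, and the cleanest formulation is probably the contradiction argument outlined above, where the calibration of the floor and remainder terms is precisely what makes $c$ strict inequalities incompatible with Theorem~\ref{thm:S_0,q_sum}.
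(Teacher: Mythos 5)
There is a genuine gap in your upper-bound argument: the contradiction you propose as the crux does not close. Summing the $c$ assumed inequalities $e(G_i) \geq \left\lfloor\frac{n(q-1)}{c}\right\rfloor(n-1)+r+1$ gives $c\left\lfloor\frac{n(q-1)}{c}\right\rfloor(n-1)+c(r+1)$, while $(q-1)(n^2-n)=\left(c\left\lfloor\frac{n(q-1)}{c}\right\rfloor+r\right)(n-1)$; a contradiction therefore requires $c(r+1)>r(n-1)$, which fails whenever $r\geq 1$ and $n\geq 2c+1$. Concretely, for $q=2$, $c=3$, $n=7$ the theorem claims $\min_i e(G_i)\leq 13$, yet three graphs with $14$ edges each are perfectly consistent with the sum bound $42$: integrality buys you only $+c$, while the discrepancy you yourself identified ($r$ versus $r(n-1)/c$) costs $r(n-1-c)$, which grows linearly in $n$. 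No bookkeeping against Theorem~\ref{thm:S_0,q_sum} alone can succeed, because the sum bound carries no information about how edges are distributed among colors (it is also attained by $q-1$ complete graphs plus $c-q+1$ empty ones, where the minimum is $0$). The missing idea is a per-color refinement of the K\"onig argument: for each vertex $v$, split a minimum vertex cover of the auxiliary graph $H_v$ into $a_v$ colors and $b_v$ vertices (so $a_v+b_v\leq q-1$), observe that in every color \emph{not} in $v$'s cover the outdegree of $v$ is at most $b_v$, and then choose the color $i$ lying in the fewest covers; with $a=\sum_v a_v$ and $b=\sum_v b_v$ this yields $e(G_i)\leq\left\lfloor\frac{a}{c}\right\rfloor(n-1)+b\leq\left\lfloor\frac{n(q-1)-b}{c}\right\rfloor(n-1)+b$, and maximizing the right-hand side over integer $b$ (this is where $n>c$ enters) gives the maximum exactly at $b=r$, i.e., the stated bound. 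Your opening paragraph gestures at ``applying the per-vertex argument color by color,'' which is the right direction, but the concrete plan you then commit to is the failed counting argument, and the asymmetric roles of color-side versus vertex-side cover elements never appear.

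Your sharpness construction also falls short as sketched: if $r$ colors are assigned to $\left\lceil\frac{n(q-1)}{c}\right\rceil$ vertices and $c-r$ colors to $\left\lfloor\frac{n(q-1)}{c}\right\rfloor$ vertices, the minimum is attained on the latter colors and equals $\left\lfloor\frac{n(q-1)}{c}\right\rfloor(n-1)$, which is $r$ below the target. The paper instead removes $r$ assignments so that \emph{every} color is assigned exactly $\left\lfloor\frac{n(q-1)}{c}\right\rfloor$ times, and then exploits vertex-side covers: a vertex $v$ that lost $x$ assignments has positive outdegree in only $q-1-x$ colors, so one may add edges from $v$ to $x$ fixed other vertices \emph{in all $c$ colors simultaneously} without creating a rainbow $S_{0,q}$, since the cover at $v$ becomes $q-1-x$ colors plus $x$ vertices. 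This adds exactly $r$ edges to every color, achieving $\left\lfloor\frac{n(q-1)}{c}\right\rfloor(n-1)+r$ in each graph. Note that this mechanism is precisely the structure (the $b$-side of the covers) that your upper-bound outline is missing; the two halves of the proof run on the same idea.
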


\begin{proof}
We proceed similarly as in the proof of Theorem~\ref{thm:S_0,q_sum}. 
Consider a collection of graphs $G_1, G_2, \ldots, G_c$ on a common set $V$ of $n$ vertices that does not contain a rainbow $S_{0,q}$. 
For any vertex $v \in V$ we consider an auxiliary bipartite graph $H$ between the vertices $V\setminus\{v\}$ and colors in $[c]$ created by connecting vertex $u \in V\setminus\{v\}$ and color $i \in [c]$ if $vu \in E(G_i)$.
Since the existence of a matching of size $q$ in $H$ means that there exists a forbidden rainbow $S_{0,q}$ centered in $v$, the maximum matching in $H$ is of size $q-1$.
From K\"onig's theorem the minimum vertex cover is also of size at most $q-1$.
Let $a_v$ and $b_v$ be the numbers of vertices in the minimum vertex cover that are in the part of $H$ related with the colors and in the part related with the other vertices, respectively.
In particular, in $a_v$ colors there are at most $n-1$ edges outgoing from~$v$ and in all other colors there are at most $b_v$ edges outgoing from~$v$.  
Let $a = \sum_{v\in V} a_v$ and $b = \sum_{v\in V} b_v$.
Since $a_v + b_v \leq q-1$, we have $a + b \leq n(q-1)$.

There exists a color $i$ which appears at most $\left\lfloor\frac{a}{c}\right\rfloor$ times in the minimum vertex covers, which gives that
\[e(G_i) \leq \left\lfloor\frac{a}{c}\right\rfloor(n-1) + b \leq \left\lfloor\frac{n(q-1)-b}{c}\right\rfloor(n-1) + b.\]
Note that increasing $b$ by $1$ either increases the above bound by $1$ or decreases it by $n-2$, and the decrease happens after at most $c-1$ increases. 
Since $n>c$, the maximum value of the bound is achieved in the last moment before the first decrease, which occurs for $b$ equal to the remainder of $n(q-1)$ when divided by $c$, as desired. 

The sharpness of the bound follows from a modification of a construction described in the proof of Theorem~\ref{thm:S_0,q_sum}. 
We enumerate the vertex set by consecutive integers from $1$ to $n$ and assign to every vertex $j \in [n]$ all colors from $(j-1)(q-1)$ to $j(q-1)-1$ considered modulo $c$. 
This way each vertex has $q-1$ colors assigned and in total we have $n(q-1)$ assignments. 
Now, remove the last $r$ assignments. 
Since $r$ is the remainder of $n(q-1)$ when divided by $c$, after the removal each color was assigned the same number of times.
For every $i \in [c]$, add to $G_i$ an edge from each vertex with color $i$ assigned to any other vertex.
This gives $\left\lfloor\frac{n(q-1)}{c}\right\rfloor(n-1)$ edges in each graph. 
Note that every vertex $v$ having $x$ assignments removed has positive outdegree in exactly $q-1-x$ graphs, so we may still add edges from $v$ to arbitrary $x$ vertices and avoid creating a rainbow $S_{0,q}$. 
This addition increases the number of edges in each graph by the total number of removed assignments, which is equal to $r$.
Altogether we obtain the required number of edges in each graph.
\end{proof}

\section{General rainbow directed star}\label{sec:S_p,q}

In this section, for integers $p,q,c\geq1$, we consider a collection of $c$ directed graphs $G_1, \ldots, G_c$ on a common vertex set and forbid rainbow star $S_{p,q}$, which is a star with the center of indegree~$p$ and outdegree $q$.
Since the problem is trivial if $c <p+q$ and symmetric with respect to $p$ and $q$, it is enough to consider only $c\geq p+q$ and $p \leq q$. 
The following theorem provides the optimal bound for the total number of edges in all graphs.

\begin{theorem}\label{thm:S_p,q_sum}
For integers $q \geq p \geq 1$, $c \geq p+q$ and $n$, every collection of directed graphs $G_1, \ldots, G_c$ on a common set of $n$ vertices containing no rainbow $S_{p,q}$ satisfies
\[\sum_{i=1}^c e(G_i) \leq 
\begin{cases} 
(p+q-1)n^2 + o(n^2) & \ \mathrm{if}\quad c \leq p+2q-1+2\sqrt{pq},\\[2pt]
\left(\frac{(c-p+1)^2}{4(c-q+1)} + p-1\right)n^2 + o(n^2) & \ \mathrm{if}\quad c \geq p+2q-1+2\sqrt{pq}.
\end{cases}
\]
Moreover, the above bounds are tight up to a lower order error term.
\end{theorem}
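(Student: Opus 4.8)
The plan is to prove matching upper and lower bounds, with the upper bound passing through a reduction to a finite quadratic optimisation over ``vertex types''.

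\emph{Local structure.} Fix a threshold $\delta=\delta(n)\to 0$ with $\delta n\to\infty$. For each vertex $v$ record only the colours in which $v$ is \emph{heavy}: set $\tilde O_v=\{i: d^+_{G_i}(v)\ge \delta n\}$, $\tilde I_v=\{i: d^-_{G_i}(v)\ge \delta n\}$, and write $\tilde o_v=|\tilde O_v|$, $\tilde\iota_v=|\tilde I_v|$. The key local lemma is that if $\tilde o_v\ge q$, $\tilde\iota_v\ge p$ and $|\tilde O_v\cup\tilde I_v|\ge p+q$, then a rainbow $S_{p,q}$ is centred at $v$. Indeed, these three inequalities are exactly Hall's condition for the bipartite graph joining $q$ ``out-slots'' and $p$ ``in-slots'' to the colour classes $\tilde O_v$ (out-slots) and $\tilde I_v$ (in-slots); a Hall system supplies $p+q$ distinct colours, and since every heavy colour has at least $\delta n>p+q$ suitable neighbours of $v$, the endpoints can be chosen distinct greedily. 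Hence the absence of a rainbow $S_{p,q}$ forces, at every $v$,
\[
\tilde o_v\le q-1\quad\text{or}\quad \tilde\iota_v\le p-1\quad\text{or}\quad \max(\tilde o_v,\tilde\iota_v)\le p+q-1,
\]
the last using $|\tilde O_v\cup\tilde I_v|\ge \max(\tilde o_v,\tilde\iota_v)$. This is the analogue, for both orientations at once, of the K\"onig/Hall argument of Theorem~\ref{thm:S_0,q_sum}; the genuinely new point is the distinct-endpoint requirement, which I expect to be the main technical obstacle and which the heavy-colour truncation is designed to neutralise at the cost of an $o(n^2)$ term.

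\emph{From edges to types.} Writing $\tilde X_i=\{v:i\in\tilde O_v\}$ and $\tilde Y_i=\{v:i\in\tilde I_v\}$, I first discard light contributions: the edges of colour $i$ leaving a non-heavy out-endpoint, or entering a non-heavy in-endpoint, number at most $\delta n$ per (colour, vertex), hence at most $O(\delta c\,n^2)=o(n^2)$ in total. The surviving edges of colour $i$ run from $\tilde X_i$ into $\tilde Y_i$, so
\[
\sum_{i=1}^c e(G_i)\le \sum_{i=1}^c |\tilde X_i|\,|\tilde Y_i|+o(n^2)=\sum_{u,w\in V}|\tilde O_u\cap\tilde I_w|+o(n^2)\le \sum_{u,w\in V}\min(\tilde o_u,\tilde\iota_w)+o(n^2).
\]
It therefore suffices to maximise $\sum_{u,w}\min(o_u,\iota_w)$ over all assignments of pairs $(o_v,\iota_v)$ obeying the trichotomy above.

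\emph{The optimisation.} Since $\sum_{u,w}\min(o_u,\iota_w)$ is non-decreasing in every coordinate, each vertex may be pushed to the coordinatewise-largest pair permitted by whichever option it satisfies, leaving only three types: $(c,p-1)$ (``source''), $(q-1,c)$ (``sink'') and $(p+q-1,p+q-1)$ (``middle''). With $a,b,d$ ($a+b+d=1$) the fractions of each type, the objective becomes $n^2\,\lambda^\top M\lambda+o(n^2)$ for the explicit $3\times3$ matrix $M$ of the values $\min(o_{\mathrm{type}},\iota_{\mathrm{type}'})$. A direct computation shows that on the two edges of the simplex meeting the middle vertex the form is linear and maximised there, with value $p+q-1$, whereas on the source--sink edge $d=0$ it is the concave quadratic $(q-1)+(c+p-2q+1)a-(c-q+1)a^2$, maximised at $a^\ast=\tfrac{c+p-2q+1}{2(c-q+1)}$ with value $\tfrac{(c-p+1)^2}{4(c-q+1)}+(p-1)$. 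The global maximum is the larger of these two, and comparing them yields the threshold $c=p+2q-1+2\sqrt{pq}$ separating the two cases. I expect this step to be routine but calculation-heavy; the only care is to confirm that no interior critical point of $\lambda^\top M\lambda$ beats the boundary.

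\emph{Sharpness.} For $c\le p+2q-1+2\sqrt{pq}$ take any $p+q-1$ colours to be complete directed graphs and the rest empty: no rainbow $S_{p,q}$ can use more than $p+q-1$ colours, and the count is $(p+q-1)(n^2-n)$. For larger $c$ I would realise the source--sink optimum: split $V$ into $A$ of size $a^\ast n$ and $B$ of size $(1-a^\ast)n$, assign (nested) out-colours $\{1,\dots,c\}$ and in-colours $\{1,\dots,p-1\}$ to each vertex of $A$, and out-colours $\{1,\dots,q-1\}$, in-colours $\{1,\dots,c\}$ to each vertex of $B$, and in each colour $i$ place all edges from its out-support to its in-support. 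Every vertex then lies in region~I or~II, so no rainbow $S_{p,q}$ occurs, and evaluating $\sum_i|\tilde X_i|\,|\tilde Y_i|$ recovers $\bigl(\tfrac{(c-p+1)^2}{4(c-q+1)}+p-1\bigr)n^2+o(n^2)$.
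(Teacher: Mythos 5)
Your proposal is correct, and it reaches the paper's bounds by a route that differs from the paper's in one substantive way: where the paper disposes of degenerate (noninjective) images of $S_{p,q}$ by invoking a colored removal lemma based on the Szemer\'edi Regularity Lemma, you neutralise them with the elementary heavy-colour truncation --- light colours at a vertex contribute $O(c\delta n^2)=o(n^2)$ edges in total, and for heavy colours your Hall-plus-greedy argument (the three inequalities $\tilde o_v\ge q$, $\tilde\iota_v\ge p$, $|\tilde O_v\cup\tilde I_v|\ge p+q$ are precisely Hall's condition for the slot--colour bipartite graph, and $\delta n>p+q$ lets you pick distinct leaves) produces a genuine rainbow $S_{p,q}$. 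This is a real gain: the proof becomes self-contained and the error term effective (e.g.\ $\delta=n^{-1/2}$ gives $O(cn^{3/2})$), whereas the removal lemma gives essentially no quantitative control. From that point on the two arguments coincide in substance: your three maximal types $(c,p-1)$, $(q-1,c)$, $(p+q-1,p+q-1)$ are the paper's classes $C$, $A$, $B$, and your matrix of values $\min(o,\iota)$ reproduces exactly the paper's pairwise edge bounds ($c+p-1$ between source and sink, $p+2q-2$ and $2p+q-2$ against the middle class, and $p-1$, $q-1$, $p+q-1$ within classes), so the optimisation problem is identical; the paper solves it by completing the square and reducing to a convex quadratic in $\beta$, you by inspecting the simplex boundary. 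The one step you flagged but did not carry out --- that no interior point of the simplex beats the boundary --- closes in one line: substituting $d=1-a-b$ kills both square terms, leaving
\[
g(a,b)=(p+q-1)-qa-pb+(c-q+1)ab,
\]
which is affine in $a$ for each fixed $b$, so the maximum over the triangle is attained on $\{a=0\}\cup\{d=0\}$, two of the three edges you already computed; comparing the edge maxima $p+q-1$ and $\frac{(c-p+1)^2}{4(c-q+1)}+p-1$ (your value, which indeed equals $(q-1)+\frac{(c+p-2q+1)^2}{4(c-q+1)}$) then yields the stated threshold $c=p+2q-1+2\sqrt{pq}$. Your extremal constructions are the paper's up to relabelling (your nested-prefix assignment on two parts is exactly the paper's construction with colours $[p-1]$ complete, colours $[q-1]\setminus[p-1]$ into the sink part, and the rest from source to sink), so sharpness stands.
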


\begin{proof}
The claimed bound is tight in the first case when the collection consists of exactly $p+q-1$ complete directed graphs and all other graphs are empty. 
This construction obviously has no rainbow $S_{p,q}$. 
In the second case, as the first $p-1$ graphs we take complete directed graphs, while for the remaining graphs we split the vertex set into two disjoint sets $A$ of size $\frac{c-p+1}{2(c-q+1)}n$ and $C$ of size $\frac{c+p-2q+1}{2(c-q+1)}n$.
For $i \in [q-1]\setminus[p-1]$, in graph $G_i$ we add all edges inside $A$ and from $C$ to $A$. 
While for $i \in [c]\setminus[q-1]$, in graph $G_i$ we add all edges from $C$ to $A$. 
Note that vertices in~$A$ have nonzero outdegree only in $q-1$ graphs, while vertices in $C$ have nonzero indegree in $p-1$ graphs, so none of them can be the center of a rainbow $S_{p,q}$. 
The total number of edges in all graphs in such construction is equal to 
\[(p-1)n^2 + |A|^2(q-p) + |A||C|(c-p+1) + o(n^2) = \left(\frac{(c-p+1)^2}{4(c-q+1)} + p-1\right)n^2 + o(n^2),\]
as desired.

To prove the upper bound, consider a collection of directed graphs $G_1, \ldots, G_c$ on a common set $V$ of $n$ vertices containing no rainbow $S_{p,q}$. 
Note that a vertex in $V$ can have nonzero indegree in $p$ graphs and nonzero outdegree in different $q$~graphs, for example if a pair of vertices is connected by edges in all the graphs in both directions. 
Or more generally, if a vertex is the center of a noninjective homomorphic image of a rainbow $S_{p,q}$.
Using a colored graph removal lemma implied by the Szemer\'edi Regularity Lemma (see \cite{Fox11} or Appendix C in~\cite{IKLS23}), we may remove all homomorphic images of a rainbow $S_{p,q}$ by deleting $o(n^2)$ edges in total. 
Thus, we may assume that no vertex in $V$ has nonzero indegree in $p$ graphs and nonzero outdegree in $q$~different graphs.

We split the vertex set $V$ into three disjoint sets. 
Let $B$ be the set of vertices incident to edges in at most $p+q-1$ graphs, $A$ be the set of vertices in $V\setminus B$ that have nonzero outdegree in at most $q-1$ graphs, and $C$ be the set of vertices in $V\setminus B$ having nonzero indegree in at most $p-1$ graphs. 
Let us denote $\alpha=|A|$, $\beta=|B|$ and $\gamma=|C|$.

Note that any two vertices in $A$ may be connected by at most $2(q-1)$ edges ($q-1$ in each direction), vertices in $B$ by at most $2(p+q-1)$ edges, while vertices in $C$ by at most $2(p-1)$ edges.
Additionally, between any vertices in $A$ and $B$ we have at most $p+2q-2$ edges ($q-1$ from $A$ to $B$ and $p+q-1$ from $B$ to $A$), between vertices in $B$ and $C$ there are at most $2p+q-2$ edges, while between vertices in $A$ and $C$ we have at most $c+p-1$ edges.
This gives an upper bound for the total number of edges in all the graphs of\\
\vspace{0pt} 
\begin{align*}
& (q-1)\alpha^2 + (p+q-1)\beta^2 + (p-1)\gamma^2 + (p+2q-2)\alpha\beta + (2p+q-2)\beta\gamma + (c+p-1)\alpha\gamma\\
&= \left(\frac{(c-p+1)^2}{4(c-q+1)} + p-1\right)(\alpha+\gamma)^2 - (c-q+1)\left(\frac{c-p+1}{2(c-q+1)}(\alpha+\gamma)-\alpha\right)^2 \\
&\qquad + (p+q-1)\beta^2 + (p+2q-2)\alpha\beta + (2p+q-2)\beta\gamma\\
&\leq \left(\frac{(c-p+1)^2}{4(c-q+1)} + p-1\right)(\alpha+\gamma)^2 + (p+q-1)\beta^2 + (p+2q-2)\beta(\alpha+\gamma)\\
&= \left(\frac{(c-p+1)^2}{4(c-q+1)} + p-1\right)(n-\beta)^2 + (p+q-1)\beta^2 + (p+2q-2)\beta(n-\beta).
\end{align*}

The obtained bound is a quadratic function of $\beta \in [0,n]$ with the coefficient of $\beta^2$ equal to $\frac{(c+p-2q+1)^2}{4(c-q+1)}$, so it is convex and its maximum is reached for $\beta=0$ or $\beta=n$. It is easy to verify that for $c \leq p+2q-1+2\sqrt{pq}$ the maximum occurs when $\beta=n$, while for $c \geq p+2q-1+2\sqrt{pq}$ it occurs when $\beta=0$, which gives the desired bounds.
\end{proof}

Note that the bound in Theorem~\ref{thm:S_p,q_sum} is not realizable for $p\geq2$ in any collection of directed graphs each having the same number of edges. 
Thus, for $p\geq2$ the optimal bound for $\min_{1\leq i\leq c} e(G_i)$ is different. 
On the other hand, when $p=1$ the second bound in Theorem~\ref{thm:S_p,q_sum} can be obtained in such a collection of directed graphs, so this theorem implies the optimal bound for $\min_{1\leq i\leq c} e(G_i)$ for $c \geq 2q+2\sqrt{q}$. 
It occurs that the same construction gives the optimal bound already for $c \geq \max\{q+\sqrt{q},2q-2\}$, but for smaller values of $c$ there are different optimal constructions. 

The theorem below gives the optimal bound for $\min_{1\leq i\leq c} e(G_i)$ for any integers $p$ and $q$. 

\begin{theorem}\label{thm:S_p,q}
For integers $q \geq p \geq 1$, $c \geq p+q$ and $n$, every collection of directed graphs $G_1, \ldots, G_c$ on a common set of $n$ vertices containing no rainbow $S_{p,q}$ satisfies the following.
The values
\[t_1 = 2p+q-1, \qquad 
t_2 = \begin{cases} \frac{(q-1)(p+q-1)}{q-p-1} & \ \mathrm{if}\quad q \geq p+2,\\ \infty & \ \mathrm{if}\quad q \leq p+1,\end{cases}\]
\[t_3 = p+q-1+\sqrt{pq}, \qquad 
t_4 = q-1+\sqrt{(q-1)(q-p)}\]
satisfy $t_1 \leq t_2$, $t_1 \leq t_3$, and either $t_2 \leq t_3 \leq t_4$ or $t_4 \leq t_3 \leq t_2$. \\
If $t_2 \leq t_3 \leq t_4$, then
\[\min_{1 \leq i \leq c} e(G_i) \leq 
\begin{cases} 
\frac{(p+q-1)^2}{c^2}n^2 + o(n^2) & \ \mathrm{if}\quad c \leq t_1,\\[3pt]
\frac{(c-q+1)^2(p+q-1)^2}{4c^2p(c-p-q+1)}n^2+o(n^2) &\ \mathrm{if}\quad  t_1 \leq c \leq t_2,\\[3pt]
\frac{q-1}{c}n^2 + o(n^2) & \ \mathrm{if}\quad t_2 \leq c \leq t_4,\\[3pt]
\frac{(c^2-(p-1)(q-1))^2}{4c^2(c-p+1)(c-q+1)}n^2 + o(n^2) & \ \mathrm{if}\quad c \geq t_4.
\end{cases}\]
While if $t_4 \leq t_3 \leq t_2$, then
\[\min_{1 \leq i \leq c} e(G_i) \leq 
\begin{cases} 
\frac{(p+q-1)^2}{c^2}n^2 + o(n^2) & \ \mathrm{if}\quad c \leq t_1,\\[3pt]
\frac{(c-q+1)^2(p+q-1)^2}{4c^2p(c-p-q+1)}n^2+o(n^2) &\ \mathrm{if}\quad  t_1 \leq c \leq t_3,\\[3pt]
\frac{(c^2-(p-1)(q-1))^2}{4c^2(c-p+1)(c-q+1)}n^2 + o(n^2) & \ \mathrm{if}\quad c \geq t_3.
\end{cases}\]
Moreover, the above bounds are tight up to a lower order error term.
\end{theorem}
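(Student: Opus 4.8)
The plan is to follow the strategy of Theorem~\ref{thm:S_p,q_sum}, but to track the edges of each color separately rather than only their sum. As there, I would first invoke the colored graph removal lemma (as in the proof of Theorem~\ref{thm:S_p,q_sum}) to delete $o(n^2)$ edges, so that no vertex is the center of a homomorphic image of a rainbow $S_{p,q}$; hence every vertex has nonzero outdegree in at most $q-1$ colors or nonzero indegree in at most $p-1$ colors. I then reuse the partition $V=A\cup B\cup C$, where $B$ collects the vertices incident to edges in at most $p+q-1$ colors, $A$ the remaining vertices with nonzero outdegree in at most $q-1$ colors, and $C$ the remaining vertices with nonzero indegree in at most $p-1$ colors, and write $\alpha n=|A|$, $\beta n=|B|$, $\gamma n=|C|$ with $\alpha+\beta+\gamma=1$.

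For the upper bound the key difference from the sum version is bookkeeping per color. For each color $i$ let $x_i$ be the number of vertices of $A$ with nonzero outdegree in $G_i$ and $y_i$ the number of vertices of $C$ with nonzero indegree in $G_i$ (together with the analogous quantities for $B$). Counting edges of $G_i$ by the type of their endpoints gives a bound of the shape $e(G_i)\le \alpha\gamma n^2 + x_i\alpha n + \min(x_i\gamma n,\,y_i\alpha n) + y_i\gamma n + (\text{terms involving } B)$, where the free summand $\alpha\gamma n^2$ comes from the $C\to A$ edges, which may use every color. The budgets $\sum_i x_i\le (q-1)\alpha n$ and $\sum_i y_i\le (p-1)\gamma n$ constrain the profile $(x_i,y_i)_i$. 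Bounding $\min_{1\le i\le c} e(G_i)$ thus becomes a minimax problem: the configuration chooses the color profile to make the minimum as large as possible, and I must show this optimum never exceeds the claimed value. I would argue that, for fixed $\alpha,\beta,\gamma$, the minimum over colors is maximized by spreading each budget uniformly over $[c]$, and then maximize the resulting quadratic in $\alpha,\beta,\gamma$ over the simplex, so that its maximum is attained on a specific vertex or edge; the thresholds $t_1,\dots,t_4$ are exactly the values of $c$ at which the maximizing face changes.

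For the matching lower bounds I would give one construction per regime, all of the balanced-assignment type from the proof of Theorem~\ref{thm:S_0,q}: distribute the allowed colors of each vertex as evenly as possible among $[c]$, so that every color is used equally often. When $c\le t_1$ the extremal configuration places all vertices in $B$, assigning each vertex a balanced $(p+q-1)$-subset of colors and adding $u\to v$ in color $i$ whenever $i$ lies in both assigned sets; this yields $\tfrac{(p+q-1)^2}{c^2}n^2$ edges per color. For large $c$ the extremal configuration is a balanced $A$--$C$ split: fill all $C\to A$ edges in every color and distribute the $A$-out-colors and the $C$-in-colors uniformly, giving $\tfrac{(c^2-(p-1)(q-1))^2}{4c^2(c-p+1)(c-q+1)}n^2$. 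The two intermediate regimes come from a pure-$A$ configuration (value $\tfrac{q-1}{c}n^2$) and a mixed configuration involving $B$ (value $\tfrac{(c-q+1)^2(p+q-1)^2}{4c^2p(c-p-q+1)}n^2$); whether the pure-$A$ regime actually occurs is exactly the dichotomy $t_2\le t_3\le t_4$ versus $t_4\le t_3\le t_2$.

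The hardest part is the optimization underlying the upper bound. The summand $\min(x_i\gamma n,\,y_i\alpha n)$ coming from the doubly constrained $A\to C$ edges makes the per-color bound bilinear rather than linear in the budgets, so a naive average over all $c$ colors is not tight; I expect to need a sharper argument showing that the color attaining the minimum can be taken with both $x_i$ and $y_i$ small, or a direct saddle-point verification, to prove that uniform spreading is optimal for the inner minimum. The second delicate point is purely computational but intricate: verifying $t_1\le t_2$, $t_1\le t_3$ and the trichotomy among $t_2,t_3,t_4$, and checking at each threshold that the maximizing face of the simplex changes precisely there, so that the four (respectively three) pieces fit together and match the constructions. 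I expect the $B$-involving regime to demand the most care, since there the color budget $p+q-1$ and the interaction with $C$ enter simultaneously.
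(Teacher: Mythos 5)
Your setup --- the removal-lemma cleaning, the partition $V=A\cup B\cup C$, the per-color sets $A_i,B_i,C_i$ and the budgets $\sum_i|A_i|\le(q-1)|A|$, $\sum_i|B_i|\le(p+q-1)|B|$, $\sum_i|C_i|\le(p-1)|C|$ --- is exactly the paper's, and your constructions for the first, third and last regimes are essentially the paper's extremal configurations. The gap is in the upper bound, and it is the step you yourself flag as unresolved. The paper's key idea is to bound each color by a single product,
\[
e(G_i)\;\le\;|A_i\cup B_i\cup C|\cdot|A\cup B_i\cup C_i|,
\]
and then, for a freely chosen pair of positive weights $(x,y)$, to linearize via $UW\le\frac{1}{4xy}(xU+yW)^2$: one picks the color $j$ that minimizes the \emph{linear} form $x\,|A_j\cup B_j\cup C|+y\,|A\cup B_j\cup C_j|$, bounds this linear form by its average over all $c$ colors using the budgets, and only then squares. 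The four regimes correspond to four different weight choices, $(x,y)=(1,1)$, $(c-p-q+1,\,p)$, $(c,\,q-1)$ and $(c-p+1,\,c-q+1)$, each tuned so that the AM--GM step is tight against the matching construction. This regime-dependent linearization is precisely the ``sharper argument'' your proposal calls for but does not supply; note the paper never proves (or needs) your claim that uniform spreading is optimal for the inner minimum --- it sidesteps that minimax entirely.

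Moreover, the concrete fallback you do propose provably cannot give the stated constants when $p\ge2$. Replacing the $A\to C$ contribution $x_iy_i$ by the concave relaxation $\min(x_i\gamma n,\,y_i\alpha n)$ (so that averaging/Jensen applies) is too lossy: at the uniform profile it contributes $\frac{p-1}{c}\alpha\gamma n^2$ in place of $\frac{(p-1)(q-1)}{c^2}\alpha\gamma n^2$, and since for $c>t_4$ the maximizing configuration has both $A$ and $C$ nonempty, the maximum of your relaxed bound over the simplex strictly exceeds the claimed $\frac{(c^2-(p-1)(q-1))^2}{4c^2(c-p+1)(c-q+1)}n^2$, so the theorem cannot be recovered this way. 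Keeping the exact bilinear term $x_iy_i$ restores tightness but destroys concavity, so ``the minimum over colors is at most the value at the averaged profile'' is no longer justified --- exactly the deadlock the paper's product-plus-weighted-AM--GM argument is designed to break. (The threshold verifications you defer are routine: the paper reduces both comparisons $t_2\le t_3$ and $t_3\le t_4$ to the single inequality $q(q-p-1)^2\ge p(p+q-1)^2$, whence the dichotomy.)
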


\begin{proof}
First, we prove the inequalities between thresholds $t_i$ for $i\in[4]$. 
Note that $t_3 \geq t_1$ since $q\geq p$.
If $q\leq p+1$, then clearly $t_4 \leq t_3 \leq t_2$ and $t_1 \leq t_2$. 
While for $q \geq p+2$ we have
\[t_2 = \frac{(q-1)(p+q-1)}{q-p-1} = \frac{(q-p-1+p)(q-p-1+2p)}{q-p-1} = q+2p-1+\frac{2p^2}{q-p-1} \geq t_1.\]
One can also verify that for $q \geq p+2$ each of the inequalities $t_2\leq t_3$ and $t_3 \leq t_4$ is equivalent to the inequality $q(q-p-1)^2 \geq p(p+q-1)^2$, so either $t_2 \leq t_3 \leq t_4$ or $t_4 \leq t_3 \leq t_2$, as desired.

Consider a collection of directed graphs $G_1, \ldots, G_c$ on a common set $V$ of $n$ vertices containing no rainbow $S_{p,q}$. 
Similarly as in the proof of Theorem~\ref{thm:S_p,q_sum} we use a colored graph removal lemma to remove all homomorphic images of a rainbow $S_{p,q}$ by deleting $o(n^2)$ total edges. 
Thus, we may assume that no vertex in $V$ has nonzero indegree in $p$ graphs and nonzero outdegree in $q$ different graphs.

We split the vertex set $V$ into disjoint sets. 
Let $B$ be the set of vertices incident to edges in at most $p+q-1$ graphs, $A$ be the set of vertices in $V\setminus B$ that have nonzero outdegree in at most $q-1$ graphs, and $C$ be the set of vertices in $V\setminus B$ having nonzero indegree in at most $p-1$ graphs. 
Additionally, for each $i \in [c]$, let $A_i \subset A$ be the set of vertices in $A$ that have nonzero outdegree in $G_i$, similarly $C_i \subset C$ be the set of vertices in $C$ that have nonzero indegree in $G_i$, while $B_i \subset B$ be the set of vertices in $B$ incident to edges in $G_i$.
For every $i\in [c]$, we denote $\alpha_i=|A_i|$, $\beta_i=|B_i|$, $\gamma_i=|C_i|$, 
$\alpha=|A|$, $\beta=|B|$ and $\gamma=|C|$. 

Observe that for every $i \in [c]$, 
\begin{equation}\label{eq:S_p,q_edgenum}
e(G_i) \leq (\alpha_i+\beta_i+\gamma)(\alpha+\beta_i+\gamma_i),
\end{equation}
because edges of $G_i$ can go only from vertices in $A_i \cup B_i \cup C$ to vertices in $A \cup B_i \cup C_i$.

For integers $x, y \geq 1$, by averaging over all colors, there is $j\in [c]$ such that
\[y\alpha + x\alpha_j + (x+y)\beta_j + x\gamma + y\gamma_j \leq \frac{1}{c}\sum_{i=1}^c y\alpha + x\alpha_i + (x+y)\beta_i + x\gamma + y\gamma_i.\]
Since 
\[\sum_{i=1}^c \alpha_i \leq (q-1)\alpha, \qquad \sum_{i=1}^c \beta_i \leq (p+q-1)\beta \qquad \mathrm{and} \qquad \sum_{i=1}^c \gamma_i \leq (p-1)\gamma,\]
from $(\ref{eq:S_p,q_edgenum})$ we obtain
\begin{align}\label{eq:S_p,q_jbound}
e(G_j) &\leq \frac{1}{xy}(x\alpha_j+x\beta_j+x\gamma)(y\alpha+y\beta_j+y\gamma_j) \nonumber\\
&\leq \frac{1}{4xy}\big(y\alpha + x\alpha_j + (x+y)\beta_j + x\gamma + y\gamma_j\big)^2 \nonumber\\
&\leq \frac{1}{4c^2xy}\big((yc+x(q-1))\alpha + (x+y)(p+q-1)\beta + (xc + y(p-1))\gamma\big)^2.
\end{align}

In particular, for $x=1$ and $y=1$, since $p \leq q$ and $\alpha+\gamma=n-\beta$, this gives
\begin{align*}
e(G_j) &\leq \frac{1}{4c^2}\big((c+q-1)\alpha + 2(p+q-1)\beta + (c+p-1)\gamma\big)^2 \\
&\leq \frac{1}{4c^2}\big((c+q-1)(\alpha+\gamma) + 2(p+q-1)\beta \big)^2 \\
&= \frac{1}{4c^2}\big((c+q-1)n + (2p+q-1-c)\beta\big)^2.
\end{align*}
If $c \leq t_1$, then the above expression is maximized at $\beta=n$ and we obtain
\[e(G_j) \leq \frac{(p+q-1)^2}{c^2}n^2,\]
which gives the first bound in both cases of the theorem. 
This bound is achieved if $A=\emptyset$, $C=\emptyset$ and $B$ is divided into $\binom{c}{p+q-1}$ equal-sized sets, one for each subset of $p+q-1$ colors assigned to the set, with edges in $G_i$, for $i\in [c]$, between any vertices from sets having color $i$ assigned. 
This is depicted in Figure~\ref{fig:S_1,2_3} for $p=1$, $q=2$ and $c=3$.

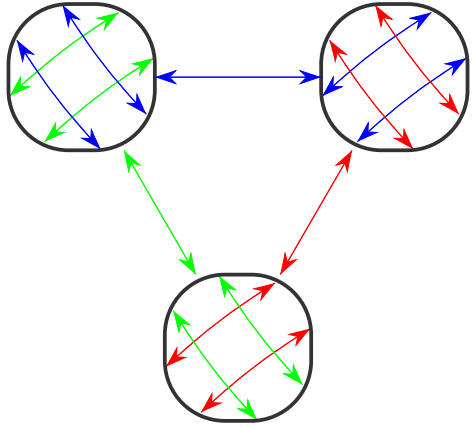
\begin{figure}[ht]
\centering
\begin{tikzpicture}[scale=0.6]
\def\radius{4}
\def\baseangle{30}

\def\bendangle{10}

\pic [local bounding box=A1] at (\baseangle-1*120:\radius cm) {pic InnerGraph={red}{green}};
\pic [local bounding box=A2] at (\baseangle-2*120:\radius cm) {pic InnerGraph={green}{blue}};
\pic [local bounding box=A3] at (\baseangle-3*120:\radius cm) {pic InnerGraph={blue}{red}};

\draw [green, -{Stealth[length=3mm, width=2mm]}] (A1) to (A2);
\draw [green, -{Stealth[length=3mm, width=2mm]}] (A2) to (A1);
\draw [red, -{Stealth[length=3mm, width=2mm]}] (A1) to (A3);
\draw [red, -{Stealth[length=3mm, width=2mm]}] (A3) to (A1);
\draw [blue, -{Stealth[length=3mm, width=2mm]}] (A2) to (A3);
\draw [blue, -{Stealth[length=3mm, width=2mm]}] (A3) to (A2);

\end{tikzpicture}
\caption{The optimal construction for 3 colors and a forbidden rainbow $S_{1,2}$.}\label{fig:S_1,2_3}
\end{figure}

Consider now $c \geq t_1$ and take $x=c-p-q+1$ and $y=p$. 
This gives 
\[yc+x(q-1) = (x+y)(p+q-1) = (c-q+1)(p+q-1),\] 
so from $(\ref{eq:S_p,q_jbound})$ we obtain
\begin{align*}
e(G_j) &\leq \frac{1}{4c^2p(c\!-\!p\!-\!q\!+\!1)}\big((c-q+1)(p+q-1)(\alpha+\beta) + ((c-p-q+1)c + p(p-1))\gamma\big)^2\\
&= \frac{1}{4c^2p(c\!-\!p\!-\!q\!+\!1)}\big((c-q+1)(p+q-1)n - (pq - (c-p-q+1)^2)\gamma\big)^2.
\end{align*}
If $c \leq t_3$, then the above expression is maximized at $\gamma=0$ and gives
\[e(G_j) \leq \frac{(c-q+1)^2(p+q-1)^2}{4c^2p(c-p-q+1)}n^2.\]
This bound is achieved when $C = \emptyset$, the set $B$ is of size $\frac{(q-1)(p+q-1)-c(q-p-1)}{2p(c-p-q+1)}n$ and is divided into $\binom{c}{p+q-1}$ equal-sized sets, one for each subset of $p+q-1$ colors assigned to the set, while set $A$ is of size $\frac{(p+q-1)(c-2p-q+1)}{2p(c-p-q+1)}n$ and is divided into $\binom{c}{q-1}$ equal-sized sets, one for each subset of $q-1$ colors assigned to the set. 
We put edges in $G_i$, for $i\in [c]$, between any vertex from a set in $A\cup B$ having color $i$ assigned to any other vertex in $A$ or a vertex in $B$ having color $i$ assigned. 
This construction is possible only if the mentioned sizes of sets $A$ and $B$ are non-negative, which occurs when $c \geq t_1$ and $c \leq t_2$, and so it proves the second bound in both cases of the theorem.
Note that if $q \leq p+1$, then the size of $B$ is always positive, which explains why $t_2$ is defined in this way.
This construction is illustrated in Figure~\ref{fig:S_1,2_3AB} for $p=1$, $q=2$ and $c=3$, despite the fact that in this case the optimal size of $B$ is equal to $0$.

\begin{figure}[ht]
\centering
\begin{tikzpicture}[scale=0.5]
\def\radius{10}
\def\basedist{7}
\def\baseangle{40}
\def\shiftlvl{0.4}
\def\bendangle{10}
\def\bendangletwo{5}


\node[rectangle, draw=red!0, line width=0.5mm, minimum width=1.65cm, minimum height = 1.65cm, rounded corners=0.cm] (A1) at (0,0) {};
\pic [local bounding box=A11] at (0,0) {pic InnerGraphMini={red}{red}};

\node[rectangle, draw=red!0, line width=0.5mm, minimum width=1.65cm, minimum height = 1.65cm, rounded corners=0.cm] (A2) at (\basedist, 0) {};
\pic [local bounding box=A21] at (\basedist, 0) {pic InnerGraphMini={blue}{blue}};

\node[rectangle, draw=red!0, line width=0.5mm, minimum width=1.65cm, minimum height = 1.65cm, rounded corners=0.cm] (A3) at (2*\basedist, 0) {};
\pic [local bounding box=A31] at (2*\basedist, 0) {pic InnerGraphMini={green}{green}};


\node[rectangle, draw=red!0, line width=0.5mm, minimum width=1.95cm, minimum height = 1.95cm, rounded corners=0.cm] (A4) at (0, -\basedist) {};
\pic [local bounding box=A41] at (0, -\basedist) {pic InnerGraph={red}{blue}};

\node[rectangle, draw=red!0, line width=0.5mm, minimum width=1.95cm, minimum height = 1.95cm, rounded corners=0.cm] (A5) at (\basedist, -\basedist) {};
\pic [local bounding box=A51] at (\basedist, -\basedist) {pic InnerGraph={green}{red}};

\node[rectangle, draw=red!0, line width=0.5mm, minimum width=1.95cm, minimum height = 1.95cm, rounded corners=0.cm] (A6) at (2*\basedist, -\basedist) {};
\pic [local bounding box=A61] at  (2*\basedist, -\basedist) {pic InnerGraph={blue}{green}};

\draw [red, -{Stealth[length=3mm, width=2mm]}] ([shift=({-0.4 cm, -1.6 cm})]A1.center) to[bend left=-1.5*\bendangle] ([shift=({-0.4 cm, 2 cm})]A4.center);
\draw [red, -{Stealth[length=3mm, width=2mm]}] ([shift=({-0.4 cm, 2 cm})]A4.center) to[bend right=-1.5*\bendangle] ([shift=({-0.4 cm, -1.6 cm})]A1.center);
\draw [blue, -{Stealth[length=3mm, width=2mm]}] ([shift=({-0.1 cm, 2 cm})]A4.center) to[bend left=-1.5*\bendangle] ([shift=({-0.1 cm, -1.6 cm})]A1.center);

\draw [red, -{Stealth[length=3mm, width=2mm]}] ([shift=({-0.2 cm, 2 cm})]A5.center) to[bend right=-1.5*\bendangle] ([shift=({-0.2 cm, -1.6 cm})]A2.center);
\draw [green, -{Stealth[length=3mm, width=2mm]}] ([shift=({0.1 cm, 2 cm})]A5.center) to[bend left=-1.5*\bendangle] ([shift=({0.1 cm, -1.6 cm})]A2.center);

\draw [blue, -{Stealth[length=3mm, width=2mm]}] ([shift=({0 cm, 2 cm})]A6.center) to[bend left=1.5*\bendangle] ([shift=({0 cm, -1.6 cm})]A3.center);
\draw [green, -{Stealth[length=3mm, width=2mm]}] ([shift=({0.3 cm, 2 cm})]A6.center) to[bend right=1.5*\bendangle] ([shift=({0.3 cm, -1.6 cm})]A3.center);
\draw [green, -{Stealth[length=3mm, width=2mm]}] ([shift=({0.3 cm, -1.6 cm})]A3.center) to[bend left=1.5*\bendangle] ([shift=({0.3 cm, 2 cm})]A6.center);

\draw [red, -{Stealth[length=3mm, width=2mm]}] ([shift=({1.6 cm, 0.4 cm})]A1.center) to[bend left=1.5*\bendangle] ([shift=({-1.6 cm, 0.4 cm})]A2.center);
\draw [blue, -{Stealth[length=3mm, width=2mm]}] ([shift=({-1.6 cm, 0.1 cm})]A2.center) to[bend left =1.5*\bendangle] ([shift=({1.6 cm, 0.1 cm})]A1.center);

\draw [blue, -{Stealth[length=3mm, width=2mm]}] ([shift=({1.6 cm, 0.4 cm})]A2.center) to[bend left=1.5*\bendangle] ([shift=({-1.6 cm, 0.4 cm})]A3.center);
\draw [green, -{Stealth[length=3mm, width=2mm]}] ([shift=({-1.6 cm, 0.1 cm})]A3.center) to[bend left =1.5*\bendangle] ([shift=({1.6 cm, 0.1 cm})]A2.center);

\draw [red, -{Stealth[length=3mm, width=2mm]}] ([shift=({1. cm, 1.5 cm})]A1.center) to[bend left=3*\bendangle] ([shift=({-1. cm, 1.5 cm})]A3.center);
\draw [green, -{Stealth[length=3mm, width=2mm]}] ([shift=({-1.2 cm, 1.2 cm})]A3.center) to[bend left=-3*\bendangle] ([shift=({1.2 cm, 1.2 cm})]A1.center);

\draw [red, -{Stealth[length=3mm, width=2mm]}] ([shift=({2 cm, 0.1 cm})]A4.center) to[bend left=-1.5*\bendangle] ([shift=({-2 cm, 0.1 cm})]A5.center);
\draw [red, -{Stealth[length=3mm, width=2mm]}] ([shift=({-2 cm, 0.1 cm})]A5.center) to[bend right =-1.5*\bendangle] ([shift=({2 cm, 0.1 cm})]A4.center);

\draw [green, -{Stealth[length=3mm, width=2mm]}] ([shift=({2 cm, 0.1 cm})]A5.center) to[bend left=-1.5*\bendangle] ([shift=({-2 cm, 0.1 cm})]A6.center);
\draw [green, -{Stealth[length=3mm, width=2mm]}] ([shift=({-2 cm, 0.1 cm})]A6.center) to[bend right =-1.5*\bendangle] ([shift=({2 cm, 0.1 cm})]A5.center);

\draw [blue, -{Stealth[length=3mm, width=2mm]}] ([shift=({1.5 cm, -1.5 cm})]A4.center) to[bend left=-3*\bendangle] ([shift=({-1.5 cm, -1.5 cm})]A6.center);
\draw [blue, -{Stealth[length=3mm, width=2mm]}] ([shift=({-1.5 cm, -1.5 cm})]A6.center) to[bend left=3*\bendangle] ([shift=({1.5 cm, -1.5 cm})]A4.center);

\draw [red, -{Stealth[length=3mm, width=2mm]}] ([shift=({0.9 cm, -1.5 cm})]A1.center) to[bend left=-1.5*\bendangle] ([shift=({-1.8 cm, 1.3 cm})]A5.center);
\draw [red, -{Stealth[length=3mm, width=2mm]}] ([shift=({-1.8 cm, 1.3 cm})]A5.center) to[bend right=-1.5*\bendangle] ([shift=({0.9 cm, -1.5 cm})]A1.center);
\draw [green, -{Stealth[length=3mm, width=2mm]}] ([shift=({-1.5 cm, 1.6 cm})]A5.center) to[bend left=1.5*\bendangle] ([shift=({1.2 cm, -1.2 cm})]A1.center);

\draw [green, -{Stealth[length=3mm, width=2mm]}] ([shift=({-0.9 cm, -1.5 cm})]A3.center) to[bend left=1.5*\bendangle] ([shift=({1.8 cm, 1.3 cm})]A5.center);
\draw [green, -{Stealth[length=3mm, width=2mm]}] ([shift=({1.8 cm, 1.3 cm})]A5.center) to[bend right=1.5*\bendangle] ([shift=({-0.9 cm, -1.5 cm})]A3.center);
\draw [red, -{Stealth[length=3mm, width=2mm]}] ([shift=({1.5 cm, 1.6 cm})]A5.center) to[bend left=-1.5*\bendangle] ([shift=({-1.2 cm, -1.2 cm})]A3.center);

\draw [blue, -{Stealth[length=3mm, width=2mm]}] ([shift=({-1.9 cm, 1.1 cm})]A6.center) to[bend right=-1*\bendangle] ([shift=({1.4 cm, -0.9 cm})]A1.center);
\draw [green, -{Stealth[length=3mm, width=2mm]}] ([shift=({-1.7 cm, 1.3 cm})]A6.center) to[bend right=-1*\bendangle] ([shift=({1.6 cm, -0.6 cm})]A1.center);

\draw [red, -{Stealth[length=3mm, width=2mm]}] ([shift=({1.7 cm, 1.3 cm})]A4.center) to[bend left=-1*\bendangle] ([shift=({-1.6 cm, -0.6 cm})]A3.center);
\draw [blue, -{Stealth[length=3mm, width=2mm]}] ([shift=({1.9 cm, 1.1 cm})]A4.center) to[bend left=-1*\bendangle] ([shift=({-1.4 cm, -0.9 cm})]A3.center);

\draw [blue, -{Stealth[length=3mm, width=2mm]}] ([shift=({-0.9 cm, -1.5 cm})]A2.center) to[bend left=-1.5*\bendangle] ([shift=({1.2 cm, 1.8 cm})]A4.center);
\draw [blue, -{Stealth[length=3mm, width=2mm]}] ([shift=({1.2 cm, 1.8 cm})]A4.center) to[bend left=1.5*\bendangle] ([shift=({-0.9 cm, -1.5 cm})]A2.center);
\draw [red, -{Stealth[length=3mm, width=2mm]}] ([shift=({0.9 cm, 2 cm})]A4.center) to[bend left=1.5*\bendangle] ([shift=({-1.2 cm, -1.3 cm})]A2.center);

\draw [blue, -{Stealth[length=3mm, width=2mm]}] ([shift=({0.9 cm, -1.5 cm})]A2.center) to[bend right=-1.5*\bendangle] ([shift=({-1.2 cm, 1.8 cm})]A6.center);
\draw [blue, -{Stealth[length=3mm, width=2mm]}] ([shift=({-1.2 cm, 1.8 cm})]A6.center) to[bend right=1.5*\bendangle] ([shift=({0.9 cm, -1.5 cm})]A2.center);
\draw [green, -{Stealth[length=3mm, width=2mm]}] ([shift=({-0.9 cm, 2 cm})]A6.center) to[bend right=1.5*\bendangle] ([shift=({1.2 cm, -1.3 cm})]A2.center);

\end{tikzpicture}
\caption{A construction for 3 colors and a forbidden rainbow $S_{1,2}$ with nonempty sets $A$ and~$B$.}\label{fig:S_1,2_3AB}
\end{figure}
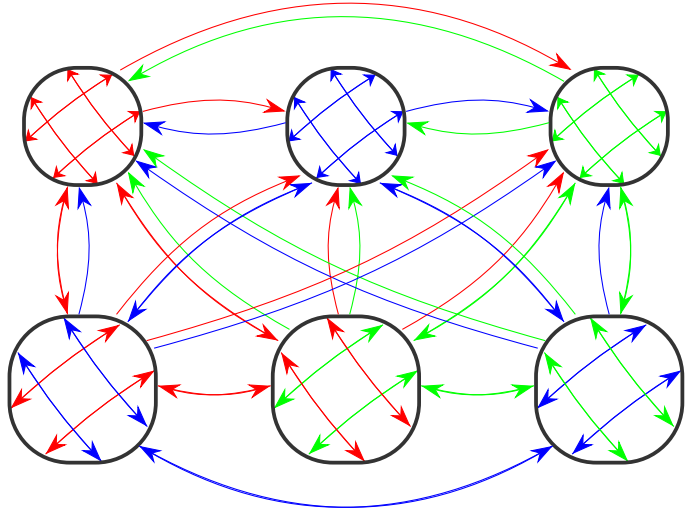

In order to prove the third bound in the first case of the theorem, consider $c$ satisfying $t_2 \leq c \leq t_4$ and take $x=c$ and $y=q-1$. 
From $(\ref{eq:S_p,q_jbound})$ we get
\[e(G_j) \leq \frac{1}{4c^3(q-1)}\big(2c(q-1)\alpha + (c+q-1)(p+q-1)\beta + (c^2 + (p-1)(q-1))\gamma\big)^2.\]
The assumption $c\leq t_4$ implies that $c^2 + (p-1)(q-1) \leq 2c(q-1)$, so
\begin{align*}
e(G_j) &\leq \frac{1}{4c^3(q-1)}\big(2c(q-1)(\alpha+\gamma) + (c+q-1)(p+q-1)\beta\big)^2 \\
&= \frac{1}{4c^3(q-1)}\big(2c(q-1)n - (c(q-p-1) - (q-1)(p+q-1))\beta\big)^2.
\end{align*}
Since $c \geq t_2$, the expression above is maximized at $\beta=0$ and gives
\[e(G_j) \leq \frac{q-1}{c}n^2,\]
which proves the third bound in the first case of the theorem.
This is the same bound as when forbidding a rainbow $S_{0,q}$, so it is achieved when $B = C = \emptyset$, while set $A$ is divided into $\binom{c}{q-1}$ equal-sized sets, one for each subset of $q-1$ colors assigned to the set, and we put edges in $G_i$, for $i\in [c]$, between any vertex from a set having color $i$ assigned to any other vertex. 
This is depicted for $q=3$ and $c=3$ in Figure~\ref{fig:S_0,3}.

Finally, consider the last remaining bound, where we have $c \geq t_3$ and $c \geq t_4$. 
From $(\ref{eq:S_p,q_jbound})$ for $x=c-p+1$ and $y=c-q+1$ we obtain
\begin{align*}
e(G_j) &\leq \frac{1}{4c^2(c\!-\!p\!+\!1)(c\!-\!q\!+\!1)}\big((c^2-(p\!-\!1)(q\!-\!1))(\alpha+\gamma) + (2c-p-q+2)(p+q-1)\beta\big)^2 \\
&= \frac{1}{4c^2(c\!-\!p\!+\!1)(c\!-\!q\!+\!1)}\big((c^2-(p\!-\!1)(q\!-\!1))n - ((c-p-q+1)^2-pq)\beta\big)^2.
\end{align*}
Since $c \geq t_3$, the above expression is maximized at $\beta = 0$ and gives
\[e(G_j) \leq \frac{(c^2-(p-1)(q-1))^2}{4c^2(c-p+1)(c-q+1)}.\]
This is achieved when $B = \emptyset$, the set $A$ is of size $\frac{(c-p+1)^2+(p-1)(q-p)}{2(c-p+1)(c-q+1)}n$ and is divided into $\binom{c}{q-1}$ equal-sized sets, one for each subset of $q-1$ colors assigned to the set, similarly, the set $C$ is of size $\frac{(c-q+1)^2-(q-1)(q-p)}{2(c-p+1)(c-q+1)}n$ and is divided into $\binom{c}{p-1}$ equal-sized sets, one for each subset of $p-1$ colors assigned to the set.
We put edges in $G_i$, for $i\in [c]$, from any vertex in $A$ having color $i$ assigned and any vertex in $C$ to any vertex in~$A$ and any vertex in $C$ having color $i$ assigned. 
This is depicted in Figure~\ref{fig:S_2,2_4+} for $c=4$ and a forbidden $S_{2,2}$ just for illustration, despite the fact that in this case it is not the extremal construction.
The sizes of sets $A$ and $C$ are non-negative if $c \geq t_4$, so it gives the last bound in both cases of the theorem.
\end{proof}

\begin{figure}[ht]
\centering
\begin{tikzpicture}[scale=0.5]
\def\radius{10}
\def\baseangle{40}
\def\shiftlvl{0.4}
\def\bendangle{10}
\def\bendangletwo{5}

\node[rectangle, draw=black!80, line width=0.5mm, minimum width=12cm, minimum height = 5cm, rounded corners=0.8cm] (A1) at (0,7) {};

\node[rectangle, draw=red!0, line width=0.5mm, minimum width=1.95cm, minimum height = 1.95cm, rounded corners=0.cm] (A21) at (150+0*\baseangle:\radius cm) {};
\pic [local bounding box=A2] at (150+0*\baseangle:\radius cm) {pic InnerGraph={red}{red}};

\node[rectangle, draw=red!0, line width=0.5mm, minimum width=1.95cm, minimum height = 1.95cm, rounded corners=0.cm] (A31) at (150-\baseangle:\radius cm) {};
\pic [local bounding box=A3] at (150-\baseangle:\radius cm) {pic InnerGraph={blue}{blue}};

\node[rectangle, draw=red!0, line width=0.5mm, minimum width=1.95cm, minimum height = 1.95cm, rounded corners=0.cm] (A41) at (150-2*\baseangle:\radius cm) {};
\pic [local bounding box=A4] at (150-2*\baseangle:\radius cm) {pic InnerGraph={green}{green}};

\node[rectangle, draw=red!0, line width=0.5mm, minimum width=1.95cm, minimum height = 1.95cm, rounded corners=0.cm] (A51) at (150-3*\baseangle:\radius cm) {};
\pic [local bounding box=A5] at (150-3*\baseangle:\radius cm) {pic InnerGraph={orange}{orange}};

\node[rectangle, draw=black!80, line width=0.5mm, minimum width=12cm, minimum height = 5cm, rounded corners=0.8cm] (A6) at (0,-7) {};

\node[rectangle, draw=red!0, line width=0.5mm, minimum width=1.95cm, minimum height = 1.95cm, rounded corners=0.cm] (A71) at (180+150+0*\baseangle:\radius cm) {};
\pic [local bounding box=A7] at (180+150+0*\baseangle:\radius cm) {pic InnerGraph={orange}{orange}};

\node[rectangle, draw=red!0, line width=0.5mm, minimum width=1.95cm, minimum height = 1.95cm, rounded corners=0.cm] (A81) at (180+150-\baseangle:\radius cm) {};
\pic [local bounding box=A8] at (180+150-\baseangle:\radius cm) {pic InnerGraph={green}{green}};

\node[rectangle, draw=red!0, line width=0.5mm, minimum width=1.95cm, minimum height = 1.95cm, rounded corners=0.cm] (A91) at (180+150-2*\baseangle:\radius cm) {};
\pic [local bounding box=A9] at (180+150-2*\baseangle:\radius cm) {pic InnerGraph={blue}{blue}};

\node[rectangle, draw=red!0, line width=0.5mm, minimum width=1.95cm, minimum height = 1.95cm, rounded corners=0.cm] (A101) at (180+150-3*\baseangle:\radius cm) {};
\pic [local bounding box=A10] at (180+150-3*\baseangle:\radius cm) {pic InnerGraph={red}{red}};

\draw [red, -{Stealth[length=3mm, width=2mm]}] ([shift=({1.7cm, 1.5cm})]A21.center) to[bend right=1.5*\bendangle] ([shift=({-1.5 cm, -1.6 cm})]A31.center);
\draw [blue, -{Stealth[length=3mm, width=2mm]}] ([shift=({-1.8cm, -1.3cm})]A31.center) to[bend right=1.5*\bendangle] ([shift=({1.3 cm, 1.8 cm})]A21.center);

\draw [blue, -{Stealth[length=3mm, width=2mm]}] (A31) to[bend left=1.5*\bendangle] (A41);
\draw [green, -{Stealth[length=3mm, width=2mm]}] (A41) ([shift=({-2 cm, 0 cm})]A41.center) to[bend left=1.5*\bendangle] ([shift=({2 cm, 0 cm})]A31.center);

\draw [green, -{Stealth[length=3mm, width=2mm]}] ([shift=({1.8cm, -1.3cm})]A41.center) to[bend left=1.5*\bendangle] ([shift=({-1.3 cm, 1.8 cm})]A51.center);
\draw [orange, -{Stealth[length=3mm, width=2mm]}] ([shift=({-1.6cm, 1.5cm})]A51.center) to[bend left=1.5*\bendangle] ([shift=({1.5 cm, -1.6 cm})]A41.center);

\draw [red, -{Stealth[length=3mm, width=2mm]}] ([shift=({2 cm, -0.8 cm})]A21.center) to[bend right=1.5*\bendangle] ([shift=({-2 cm, -0.8 cm})]A51.center);
\draw [orange, -{Stealth[length=3mm, width=2mm]}] ([shift=({-2 cm, -1.1 cm})]A51.center) to[bend left=1.5*\bendangle] ([shift=({2 cm, -1.1 cm})]A21.center);

\draw [red, -{Stealth[length=3mm, width=2mm]}] ([shift=({2.1 cm, 0.1 cm})]A21.center) to[bend right=1.5*\bendangle] ([shift=({-0.7 cm, -2 cm})]A41.center);
\draw [green, -{Stealth[length=3mm, width=2mm]}] ([shift=({-0.2 cm, -2 cm})]A41.center) to[bend left=1.5*\bendangle] ([shift=({2.1 cm, -0.2 cm})]A21.center);

\draw [orange, -{Stealth[length=3mm, width=2mm]}] ([shift=({-2 cm, 0.1 cm})]A51.center) to[bend left=1.5*\bendangle] ([shift=({0.7 cm, -2 cm})]A31.center);
\draw [blue, -{Stealth[length=3mm, width=2mm]}] ([shift=({0.2 cm, -2 cm})]A31.center) to[bend right=1.5*\bendangle] ([shift=({-2 cm, -0.2 cm})]A51.center);

\draw [green, -{Stealth[length=3mm, width=2mm]}] ([shift=({-1.6cm, -1.5cm})]A71.center) to[bend right=1.5*\bendangle] ([shift=({1.5 cm, 1.6 cm})]A81.center);
\draw [orange, -{Stealth[length=3mm, width=2mm]}] ([shift=({1.8cm, 1.3cm})]A81.center) to[bend right=1.5*\bendangle] ([shift=({-1.3 cm, -1.8 cm})]A71.center);

\draw [blue, -{Stealth[length=3mm, width=2mm]}] (A81) to[bend left=1.5*\bendangle] (A91);
\draw [green, -{Stealth[length=3mm, width=2mm]}] (A91) ([shift=({2 cm, 0 cm})]A91.center) to[bend left=1.5*\bendangle] ([shift=({-2 cm, 0 cm})]A81.center);

\draw [red, -{Stealth[length=3mm, width=2mm]}] ([shift=({-1.8cm, 1.3cm})]A91.center) to[bend left=1.5*\bendangle] ([shift=({1.3 cm, -1.8 cm})]A101.center);
\draw [blue, -{Stealth[length=3mm, width=2mm]}] ([shift=({1.6cm, -1.5cm})]A101.center) to[bend left=1.5*\bendangle] ([shift=({-1.5 cm, 1.6 cm})]A91.center);

\draw [red, -{Stealth[length=3mm, width=2mm]}] ([shift=({-2 cm, 0.8 cm})]A71.center) to[bend right=1.5*\bendangle] ([shift=({2 cm, 0.8 cm})]A101.center);
\draw [orange, -{Stealth[length=3mm, width=2mm]}] ([shift=({2 cm, 1.1 cm})]A101.center) to[bend left=1.5*\bendangle] ([shift=({-2 cm, 1.1 cm})]A71.center);

\draw [blue, -{Stealth[length=3mm, width=2mm]}] ([shift=({-2 cm, 0.2 cm})]A71.center) to[bend right=1.5*\bendangle] ([shift=({0.2 cm, 2 cm})]A91.center);
\draw [orange, -{Stealth[length=3mm, width=2mm]}] ([shift=({0.7 cm, 2 cm})]A91.center) to[bend left=1.5*\bendangle] ([shift=({-2 cm, -0.1 cm})]A71.center);

\draw [green, -{Stealth[length=3mm, width=2mm]}] ([shift=({2 cm, 0.2 cm})]A101.center) to[bend left=1.5*\bendangle] ([shift=({-0.2 cm, 2 cm})]A81.center);
\draw [red, -{Stealth[length=3mm, width=2mm]}] ([shift=({-0.7 cm, 2 cm})]A81.center) to[bend right=1.5*\bendangle] ([shift=({2 cm, -0.1 cm})]A101.center);

\draw [red, -{Stealth[length=3mm, width=2mm]}] ([shift=({0 cm, -2 cm})]A21.center) to[bend left=-1.5*\bendangle] ([shift=({0 cm, 2 cm})]A101.center);
\draw [blue, -{Stealth[length=3mm, width=2mm]}] ([shift=({0 cm, -2 cm})]A31.center) to[bend left=-1.5*\bendangle] ([shift=({0 cm, 2 cm})]A91.center);
\draw [green, -{Stealth[length=3mm, width=2mm]}] ([shift=({0 cm, -2 cm})]A41.center) to[bend left=1.5*\bendangle] ([shift=({0 cm, 2 cm})]A81.center);
\draw [orange, -{Stealth[length=3mm, width=2mm]}] ([shift=({0 cm, -2 cm})]A51.center) to[bend left=1.5*\bendangle] ([shift=({0 cm, 2 cm})]A71.center);

\draw [orange, -{Stealth[length=3mm, width=2mm]}] ([shift=({0 cm, -1 cm})]A6.east) to[bend left=-5*\bendangle] ([shift=({0 cm, 1 cm})]A1.east);
\draw [green, -{Stealth[length=3mm, width=2mm]}] ([shift=({0 cm, -0.3 cm})]A6.east) to[bend left=-5*\bendangle] ([shift=({0 cm, 0.3 cm})]A1.east);
\draw [blue, -{Stealth[length=3mm, width=2mm]}] ([shift=({0 cm, 0.3 cm})]A6.east) to[bend left=-5*\bendangle] ([shift=({0 cm, -0.3 cm})]A1.east);
\draw [red, -{Stealth[length=3mm, width=2mm]}] ([shift=({0 cm, 1 cm})]A6.east) to[bend left=-5*\bendangle] ([shift=({0 cm, -1 cm})]A1.east);

\end{tikzpicture}
\caption{A construction for 4 colors and a forbidden rainbow $S_{2,2}$ with nonempty sets $A$ and~$C$.}\label{fig:S_2,2_4+}
\end{figure}
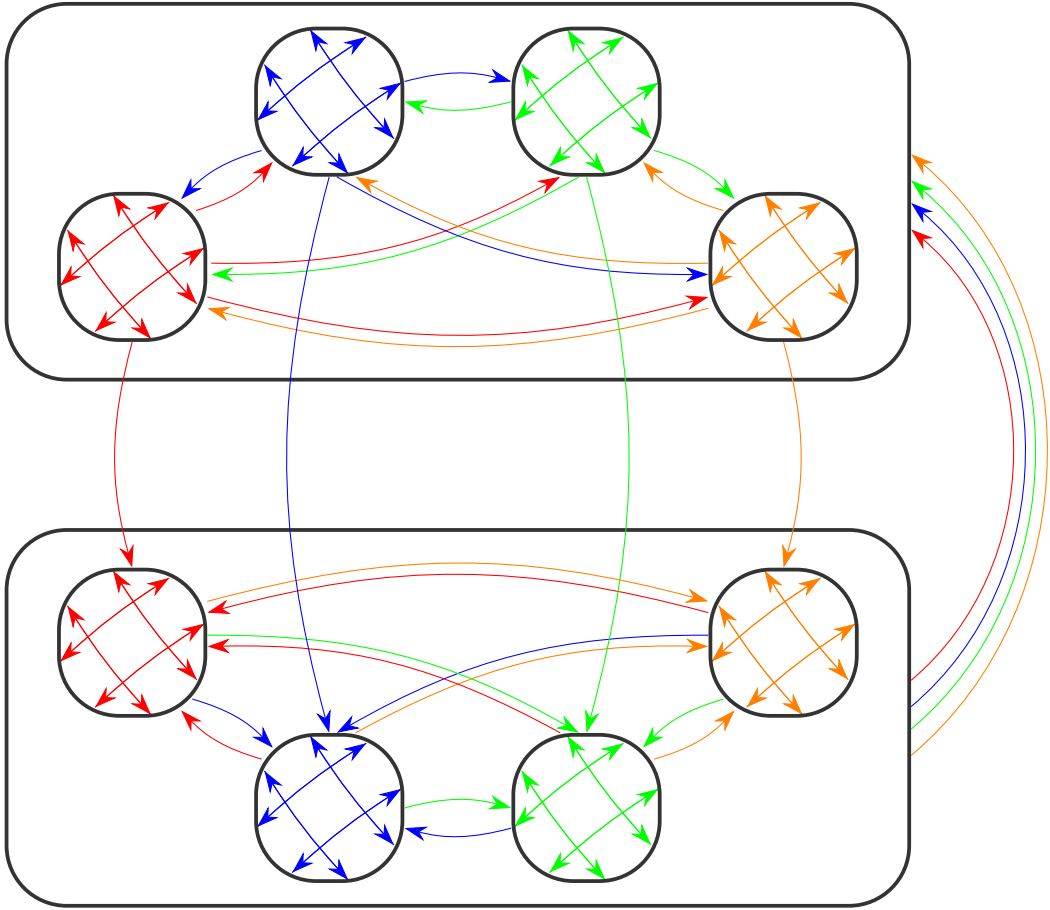

\section{Rainbow directed $S_{1,1}$}\label{sec:S_1,1}

In this section we consider $c \geq 2$ directed graphs $G_1, G_2, \ldots, G_c$ on a common vertex set and forbid a rainbow star $S_{1,1}$, which is a directed path of length 2. 
The following theorem provides the optimal bound for the sum of the number of edges in all graphs.

\begin{theorem}\label{thm:S_1,1_sum}
For  integers $c \geq 2$ and $n \geq 3$, every collection of directed graphs $G_1, \ldots, G_c$ on a common set of $n$ vertices containing no rainbow $S_{1,1}$ satisfies
\[\sum_{i=1}^c e(G_i) \leq 
\begin{cases} 
n^2-n & \ \mathrm{if}\ c \leq 3,\\
c\big\lfloor\frac{n^2}{4}\big\rfloor & \ \mathrm{if}\ c \geq 4.
\end{cases}
\]
Moreover, the above bounds are sharp.
\end{theorem}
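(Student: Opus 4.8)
The plan is to first record the two constructions matching the two cases — a single complete directed graph in one color (giving $n^2-n$, optimal when $c\le 3$) and a balanced source/sink split in which $V$ is partitioned into parts $X,Y$ with every $G_i$ consisting of all arcs from $X$ to $Y$ (giving $c\lfloor n^2/4\rfloor$, optimal when $c\ge 4$) — and then to prove a matching upper bound by local analysis at each vertex. I would reformulate the forbidden configuration locally: for a fixed center $v$, encode the in-arcs as pairs $(u,j)\in V\times[c]$ with $uv\in E(G_j)$ and the out-arcs as pairs $(w,k)$ with $vw\in E(G_k)$; the absence of a rainbow $S_{1,1}$ centered at $v$ is exactly the cross-condition that every in-arc $(u,j)$ and out-arc $(w,k)$ satisfies $u=w$ or $j=k$. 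Since $\sum_{i=1}^c e(G_i)$ equals the sum over all $v$ of the number of out-arcs at $v$, it suffices to control this cross-intersecting structure simultaneously at all vertices.

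The key step is a structural dichotomy for each \emph{internal} vertex $v$ (one with both in- and out-arcs): the cross-condition forces that either all arcs at $v$ use a single common color, or all arcs at $v$ that avoid one fixed ``dominant'' color are incident to a single fixed partner $z_v$. Thus a vertex can be incident to arcs of many different colors and to many different vertices only if it is a pure source ($d^-(v)=0$) or a pure sink ($d^+(v)=0$); otherwise it behaves essentially monochromatically. I would then split $V$ into sources, sinks, and internal vertices, and bound the arcs among the source/sink part by $\lfloor n^2/4\rfloor$ per color — using that two sources cannot be joined by an arc and likewise two sinks, so these arcs form a bipartite digraph with at most $\lfloor n^2/4\rfloor$ arcs per color — while bounding the monochromatic internal part by the single-color complete value $n^2-n$. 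Combining these and optimizing over the sizes of the parts (a quadratic optimization in the spirit of the proof of Theorem~\ref{thm:S_p,q_sum}), I expect to reach $\sum_{i=1}^c e(G_i)\le\max\{n^2-n,\ c\lfloor n^2/4\rfloor\}$; a short numerical check then shows $n^2-n\ge c\lfloor n^2/4\rfloor$ exactly when $c\le 3$ (with equality when $c=3$ and $n\in\{3,4\}$), yielding the two cases.

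The main obstacle is that Theorem~\ref{thm:S_1,1_sum} asserts an \emph{exact} bound, with no $o(n^2)$ slack, so the degenerate local configurations must be handled precisely rather than absorbed into an error term. Concretely, an internal vertex whose multicolored arcs all point to a single partner $z_v$ — for instance a full digon between $v$ and $z_v$ in all $c$ colors, or a vertex of in-degree one that still sends single-color arcs everywhere — can have degree growing with $n$, and one must verify that such vertices cannot be combined across the graph to beat either extremal construction. Making the exchange from a mixed configuration to a pure source/sink or pure monochromatic configuration lossless, keeping track of the integrality hidden in $\lfloor n^2/4\rfloor$, and checking the small cases $n=3,4$ where the two bounds coincide, is where the real work lies; the local dichotomy is the tool that renders this bookkeeping finite and, I expect, tractable.
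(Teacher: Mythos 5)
Your constructions are the right ones, but the upper-bound argument has two genuine gaps. First, the structural dichotomy you build everything on is false as stated. Take an internal vertex $v$ with in-arcs $(u_1,1)$, $(u_2,2)$ and out-arcs $(u_1,2)$, $(u_2,1)$: every in/out pair shares either its vertex or its color, so no rainbow $S_{1,1}$ is centered at $v$, yet the arcs at $v$ neither use a single common color, nor is there a color $j$ and a partner $z_v$ such that all arcs avoiding $j$ are incident to $z_v$ (whichever of the two colors you declare dominant, the remaining arcs touch both $u_1$ and $u_2$). The correct local statement has to carry this $2\times 2$ configuration as an exceptional case (it is bounded --- at most four arcs confined to two partners --- so it is harmless asymptotically), and in a theorem with no $o(n^2)$ slack such exceptions cannot simply be absorbed.

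Second, and more seriously, the step you yourself flag as ``where the real work lies'' is exactly the content of the theorem, and it is not carried out: nothing in the proposal controls the arcs between the internal (monochromatic or partner-type) vertices and the source/sink part, nor the digon-partner vertices whose degree can grow with $n$, nor the exact integer optimization over part sizes; you only state the expectation that the combination yields $\max\{n^2-n,\,c\lfloor n^2/4\rfloor\}$. The paper avoids this global bookkeeping entirely with an induction on $n$: if some ordered pair $(u,v)$ carries the arc $uv$ in at least two colors, then every other vertex $x$ satisfies $e(\{u,v\},x)\le c$ (and $\le 2$ if the reversed arc $vu$ is also present, since then only one color can appear on all arcs between $\{u,v\}$ and $x$), so deleting $u$ and $v$ and applying the inductive bound on $n-2$ vertices closes both cases after a short calculation; if no such pair exists, every ordered pair carries at most one arc and the total is at most $n^2-n$, which is at most the claimed bound in both cases. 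If you want to rescue your vertex-local approach, you either need this kind of pair-removal induction to dispose of the degenerate configurations, or you must prove the corrected dichotomy and then execute the exact extremal combination in full --- at which point the argument will be substantially longer than the paper's.
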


\begin{proof}
The bound for $c\leq3$ is obtained when $G_1$ is a complete directed graph and all $G_i$ for $i \in [c]\setminus\{1\}$ are empty graphs.
While for $c\geq4$ the bound is achieved when each graph is the same balanced complete bipartite graph with all edges oriented in the same direction. This is depicted in Figure~\ref{fig:S_1,1}.

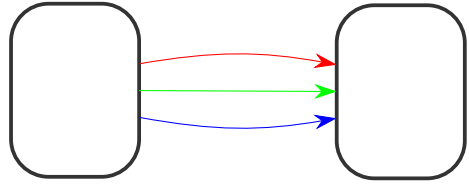
\begin{figure}[ht]
\centering
\begin{tikzpicture}
\def\radius{5}
\def\baseangle{30}
\def\bendangle{10}
\def\abovecorrection{-0.5}
\def\nodesizebigelipse{0.15}

\pic [local bounding box=A]                  {pic EmptyBox={0.7}{1.3}};
\pic [local bounding box=B, above right = \abovecorrection cm and 3cm of A]   {pic EmptyBox={0.7}{1.3}};

\draw [green, -{Stealth[length=3mm, width=2mm]}] (A) to (B); 

\begin{scope}[transform canvas={yshift=0.5em}]

\draw [red, -{Stealth[length=3mm, width=2mm]}] (A) to[bend left=\bendangle] (B); 

\end{scope}

\begin{scope}[transform canvas={yshift=-0.5em}]

\draw [blue, -{Stealth[length=3mm, width=2mm]}] (A) to[bend right=\bendangle] (B); 

\end{scope}

\end{tikzpicture}
\caption{The optimal construction for at least $4$ colors and a forbidden rainbow $S_{1,1}$.}\label{fig:S_1,1}
\end{figure}

We prove the upper bound by induction on $n$.  
For $n=3$ and $n=4$ it is easy to verify that the bound holds. 
Consider then a collection of directed graphs $G_1, \ldots, G_c$ on a common set $V$ of $n\geq5$ vertices containing no rainbow $S_{1,1}$ and assume that the bound holds for all collections on a smaller number of vertices.

For a subset $U \subset V$ and a vertex $v \in V$ by $e(U,v)$ we denote the total number of edges in all graphs $G_1, \ldots, G_c$ between the set $U$ and the vertex $v$. 
If $U$ consists of only one vertex~$u$, we write $e(u,v)$ instead of $e(\{u\},v)$ for brevity. 

Assume first that there are two vertices $u$ and $v$ such that there exists an edge $uv$ in at least two colors.
To avoid rainbow $S_{1,1}$, for each vertex $x \in V\setminus\{u,v\}$, there are no edges $xu$ nor $vx$, and if $ux \in E(G_i)$ and $xv\in E(G_j)$, then $i=j$. 
This means that $e(\{u,v\}, x) \leq c$. 
Moreover, if there is an edge $vu$ in some color, then between $\{u,v\}$ and $x$ we can have edges only in that color, so $e(\{u,v\}, x) \leq 2$. 
Consider those cases separately. 
If $e(u,v)\leq c$, then, together with the inductive assumption on $V \setminus \{u,v\}$, for $c\leq 3$ we obtain 
\[\sum_{i=1}^c e(G_i) \leq c + c(n-2) + (n-2)^2-(n-2) = n^2-n-(3-c)(n-1)-(n-3)\leq n^2 - n,\]
while for $c\geq4$ we have
\[\sum_{i=1}^c e(G_i) \leq c + c(n-2) + c\left\lfloor\frac{(n-2)^2}{4}\right\rfloor = c\left\lfloor\frac{n^2}{4}\right\rfloor.\]
On the other hand, if $e(u,v)>c$, then for $c\leq 3$ we obtain 
\[\sum_{i=1}^c e(G_i) \leq 2c + 2(n-2) + (n-2)^2-(n-2) = n^2 - n -2(n-4)-2(3-c) \leq n^2-n,\]
while for $c\geq4$ we have
\[\sum_{i=1}^c e(G_i) \leq 2c + 2(n-2) + c\left\lfloor\frac{(n-2)^2}{4}\right\rfloor = c\left\lfloor\frac{n^2}{4}\right\rfloor -(c-4)(n-3)-2(n-4) \leq c\left\lfloor\frac{n^2}{4}\right\rfloor.\]

Therefore, we are left with the case when between any pair of vertices there are at most two edges (at most one edge in each direction). But then $\sum_{i=1}^c e(G_i) \leq 2\binom{n}{2} = n^2-n$, which gives the correct bound for $c \leq 3$ and is smaller than $c\left\lfloor\frac{n^2}{4}\right\rfloor$ for $c \geq 4$, as desired.
\end{proof}

Since the extremal construction for $c\geq4$ in Theorem~\ref{thm:S_1,1_sum} has the same number of edges in each graph, this theorem immediately implies the optimal bound for $\min_{1\leq i \leq c} e(G_i)$ if $c\geq4$. We will show that for $c \leq 3$ the same bound holds. 

\begin{theorem}\label{thm:S_1,1_2+}
For any integers $c \geq 2$ and $n \geq 4$, every collection of directed graphs $G_1, \ldots, G_c$ on a common set of $n$ vertices containing no rainbow $S_{1,1}$ satisfies
\[\min_{1\leq i \leq c} e(G_i) \leq \left\lfloor\frac{n^2}{4}\right\rfloor.\]
Moreover, this bound is sharp.
\end{theorem}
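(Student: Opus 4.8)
The plan is to reduce the statement for all $c\ge 2$ to the case $c=2$, and then to handle $c=2$ directly. For the reduction, suppose $G_1,\dots,G_c$ has no rainbow $S_{1,1}$ and, after relabeling, let $G_c$ be a color of minimum size, so $e(G_c)=\min_{1\le i\le c}e(G_i)$. I would form the two-colored family $(H,G_c)$, where $H=\bigcup_{i\ne c}G_i$ is the digraph whose arcs are exactly those appearing in some $G_i$ with $i\ne c$. A rainbow $S_{1,1}$ in $(H,G_c)$ is a directed path $u\to v\to w$ on three distinct vertices with one arc in $H$ and the other in $G_c$; the $H$-arc lies in some $G_i$ with $i\ne c$, so such a path is already a rainbow $S_{1,1}$ of the original family, which is impossible. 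Hence $(H,G_c)$ has no rainbow $S_{1,1}$. Since $H$ contains $G_1$ as an arc set, $e(H)\ge e(G_1)\ge e(G_c)$, so $\min\{e(H),e(G_c)\}=e(G_c)$. Thus the two-color case applied to $(H,G_c)$ yields $e(G_c)\le\lfloor n^2/4\rfloor$, which is the desired bound. This leaves only $c=2$.

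For $c=2$, I would begin from the elementary fact that a digraph with no directed path $u\to v\to w$ on three distinct vertices has at most $\lfloor n^2/4\rfloor$ arcs: every vertex is then a source, a sink, isolated, or belongs to a pair joined only to each other, and arcs run only from the source side to the sink side, so the count is at most $\lfloor n^2/4\rfloor$. Consequently, if $\min\{e(G_1),e(G_2)\}>\lfloor n^2/4\rfloor$ then both $G_1$ and $G_2$ contain a genuine directed $2$-path, i.e.\ each has a vertex with distinct in- and out-neighbors. The no-rainbow condition forces strong local rigidity at such a vertex: if $v$ has a color-$1$ in-neighbor and a color-$1$ out-neighbor that are distinct, then $v$ can have at most one color-$2$ out-arc (necessarily ending at its in-neighbor) and at most one color-$2$ in-arc (necessarily starting at its out-neighbor), so its total degree in color $2$ is at most $2$; symmetrically, a color-$2$-mixed vertex has color-$1$ degree at most $2$.

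I would then prove the $c=2$ bound by induction on $n$, following the scheme of the proof of Theorem~\ref{thm:S_1,1_sum}, checking the base cases $n=4$ and $n=5$ directly. Since $\lfloor n^2/4\rfloor-\lfloor(n-2)^2/4\rfloor=n-1$, it suffices for the inductive step to delete two vertices so that at most $n-1$ arcs are removed in each color: then the color guaranteed small on the remaining $n-2$ vertices stays below $\lfloor n^2/4\rfloor$ on all $n$ vertices. When some ordered pair $u\to v$ carries arcs in both colors, the argument in Theorem~\ref{thm:S_1,1_sum} shows that $u$ can only receive from $v$, that $v$ can only send to $u$, and that every other vertex $x$ meets $\{u,v\}$ in a restricted way, so deleting $\{u,v\}$ removes few arcs per color. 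When no such pair exists, the rigidity above keeps each color-$2$-mixed vertex at color-$1$ degree $\le 2$ (and vice versa), and I would use this to locate a low-degree pair to delete.

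The step I expect to be the crux is this deletion in the $c=2$ induction. The target $\lfloor n^2/4\rfloor$ is tight, attained by taking both graphs equal to a common balanced complete bipartite orientation, so the per-color budget of $n-1$ deleted arcs is met with no slack; in particular a vertex of very large degree in a single color—a near-universal source or sink—can cause a naive deletion to remove $n$ rather than $n-1$ arcs. Controlling these extremal vertices, by choosing the deleted pair to straddle the source and sink sides as in the tight construction and invoking the rigidity to bound their behaviour in the other color, is where the real difficulty lies; the merging reduction and the structural lemma are by comparison routine.
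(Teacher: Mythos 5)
Your reduction to $c=2$ is sound (merging the other colors into $H$ preserves the absence of a rainbow $S_{1,1}$, and $\min\{e(H),e(G_c)\}=e(G_c)$; the paper gets the same reduction even more cheaply by simply restricting attention to two of the colors), and your two auxiliary facts---the $\lfloor n^2/4\rfloor$ bound for a single digraph with no directed $2$-path, and the rigidity at a vertex that centers a monochromatic $2$-path---are both true. But the proof has a genuine gap exactly where you say you expect one: the inductive step for $c=2$ is never carried out. You need, in every configuration not covered by an easy case, two vertices whose deletion destroys at most $n-1$ arcs in \emph{each} color; the budget $\lfloor n^2/4\rfloor-\lfloor(n-2)^2/4\rfloor=n-1$ has no slack, and since the induction hypothesis only tells you that \emph{some} color of the reduced instance is small, the deleted pair must be cheap for both colors simultaneously. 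You never prove such a pair exists, and the one case you lean on---an ordered pair $u\to v$ carrying arcs in both colors---does not supply one. Concretely, take $uv\in E(G_1)\cap E(G_2)$, all arcs $u\to x$ and $x\to v$ in $G_1$, and no other arcs: this collection has no rainbow $S_{1,1}$ and satisfies all the restrictions you quote from Theorem~\ref{thm:S_1,1_sum} (those restrictions bound the number of arcs between $x$ and $\{u,v\}$ summed over colors, not per color), yet deleting $\{u,v\}$ destroys $2n-3>n-1$ arcs of $G_1$. The bound of course still holds there, since $e(G_2)=1$, but your inductive bookkeeping cannot conclude anything from the deletion.

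The paper's proof is organized differently and sidesteps this difficulty. It inducts only in the easy case where some vertex meets at most $2$ edges of each color, deleting that single vertex and using $2+\lfloor(n-1)^2/4\rfloor\le\lfloor n^2/4\rfloor$ for $n\ge 5$. In the hard case---every vertex meets at least $3$ edges of some color---there is no induction at all: the vertex set is partitioned by degree type ($\alpha$ vertices with outdegree $0$ in both colors, $\gamma$ with indegree $0$ in both, $\beta_i$ vertices seeing only color $i$, and $\delta_i^{\pm}$ vertices of out/in-degree at least $2$ in color $i$), and one shows that every $\delta$-vertex lies in a unique pair $\{u,v\}$ with $uv\in E(G_1)$ and $vu\in E(G_2)$, forming a matching $M$ of size $m=\frac12(\delta_1^++\delta_1^-+\delta_2^++\delta_2^-)$, while all non-$M$ arcs of $G_1$ run from $\gamma\cup\beta_1\cup\delta_1^+$ to $\alpha\cup\beta_1\cup\delta_1^-$. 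After choosing by symmetry which color to bound, this yields $e(G_1)\le\lfloor(n-m)^2/4\rfloor+m\le\lfloor n^2/4\rfloor$ directly. To salvage your plan you would essentially have to reprove this structure in order to locate a deletion pair that is cheap for both colors, at which point the direct count is shorter; as it stands, the crux of your argument is missing.
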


\begin{proof}
The bound is achieved when each graph is the same balanced complete bipartite graph with all edges oriented in the same direction (see Figure~\ref{fig:S_1,1}).

Note that it is enough to consider $c=2$, because if the theorem is true for 2 colors, then it also holds for any larger number of colors. 
We proceed by induction on $n$. 
For $n=4$ it is easy to verify that the theorem holds (note that for $n=3$ it is not true as one can have a directed triangle oriented clockwise in $G_1$ and anticlockwise in $G_2$). 
Consider then two directed graphs $G_1$, $G_2$ on a common
set $V$ of $n \geq 5$ vertices containing no rainbow $S_{1,1}$ and assume that the theorem holds for all pairs of graphs on a smaller number of vertices.

Assume first that there exists a vertex $v \in V$ incident to at most $2$ edges in each of the graphs. 
Then, from the induction assumption on $V\setminus\{v\}$ we obtain
\[\min_{1\leq i \leq 2} e(G_i) \leq 2 + \left\lfloor\frac{(n-1)^2}{4}\right\rfloor =  \left\lfloor\frac{n^2-2n+9}{4}\right\rfloor \leq  \left\lfloor\frac{n^2}{4}\right\rfloor.\]
Therefore, we may assume that no such vertex exists in $V$.
In particular, every vertex has outdegree or indegree at least $2$ in at least one of the graphs.

We split the vertex set $V$ into disjoint sets based on colors and directions of incident edges, similarly as in Section~\ref{sec:S_p,q}. 
Let $\alpha$ and $\gamma$ be the number of vertices that have outdegree~0, respectively indegree~0, in both graphs.
For $i \in [2]$, let $\beta_i$ be the number of remaining vertices that are incident only to edges in $G_i$.
The remaining vertices in $V$ can be divided into $4$ sets depending whether outdegree or indegree is large and in which graph. 
For $i\in[2]$, let $\delta_i^+$ be the number of vertices having outdegree at least $2$ in $G_i$ and $\delta_i^-$ the number of vertices having indegree at least $2$ in $G_i$. 
Let $M$ be the set of pairs of vertices $u,v$ such that $uv \in E(G_1)$ and $vu \in E(G_2)$. 
Note that every vertex appears in at most one pair in $M$ as otherwise it has no other incident edges which contradicts the condition proven in the previous paragraph. 
Moreover, all $\delta_1^+ + \delta_1^- + \delta_2^+ + \delta_2^-$ vertices must appear in $M$.
Thus, we can set $m = |M| = \frac{1}{2}(\delta_1^+ + \delta_1^- + \delta_2^+ + \delta_2^-)$.

Note that all edges in $G_1$, except those in $M$, are going from $\gamma + \beta_1 + \delta_1^+$ vertices and to $\alpha + \beta_1 + \delta_1^-$ vertices. 
From symmetry, we may assume without loss of generality that 
\[\beta_1 + \frac{1}{2}\delta_1^+ + \frac{1}{2}\delta_1^- \leq \beta_2 + \frac{1}{2}\delta_2^+ + \frac{1}{2}\delta_2^-,\] 
we obtain
\begin{align*}
e(G_1) &\leq (\gamma + \beta_1 + \delta_1^+)(\alpha + \beta_1 + \delta_1^-) + m \\
&\leq \left\lfloor\frac{(\gamma + \beta_1 + \delta_1^+ + \alpha + \beta_1 + \delta_1^-)^2}{4}\right\rfloor + m\\
&\leq \left\lfloor\frac{(\gamma + \alpha + \beta_1 + \beta_2 + \frac{1}{2}\delta_1^+ + \frac{1}{2}\delta_1^- + \frac{1}{2}\delta_2^+ + \frac{1}{2}\delta_2^-)^2}{4}\right\rfloor + m\\
&= \left\lfloor\frac{(n-m)^2}{4}\right\rfloor + m\\
&= \left\lfloor\frac{n^2 - m(2n-m-4)}{4}\right\rfloor\\
&\leq \left\lfloor\frac{n^2}{4}\right\rfloor,
\end{align*}
because $m \leq \frac{1}{2}n$ and $n\geq4$, which concludes the proof.
\end{proof}

\subsection*{Acknowledgements}

We are grateful to the organizers of the 14th Eml\'ekt\'abla Workshop where the authors worked on the problems presented in this paper.

\end{document}